\documentclass[a4paper,11pt]{amsart}
\usepackage{latexsym,bm}
\usepackage{mathrsfs,amsmath,amssymb,amsthm,enumerate}
\usepackage{color}
\usepackage{tikz}

\theoremstyle{plain}
\newtheorem{theorem}{Theorem}[section]
\newtheorem{proposition}[theorem]{Proposition}
\newtheorem{lemma}[theorem]{Lemma}
\newtheorem{corollary}[theorem]{Corollary}
\numberwithin{equation}{section}

\theoremstyle{definition}
\newtheorem{definition}[theorem]{Definition}
\newtheorem{remark}[theorem]{Remark}
\newtheorem{example}[theorem]{Example}

\newcommand{\C}{\mathbb{C}}
\newcommand{\Q}{\mathbb{Q}}
\newcommand{\R}{\mathbb{R}}
\newcommand{\Z}{\mathbb{Z}}

\newcommand{\N}{\mathbb{N}}

\newcommand{\B}{\mathcal{B}}
\newcommand{\ta}{ta}
\newcommand{\tsa}{tsa}

\def\red#1{{\textcolor{red}{#1}}}

\newcommand{\atopp}[2]{\genfrac{}{}{0pt}{}{#1}{#2}}

\DeclareMathOperator{\rank}{rank}

\pagestyle{myheadings}
\markboth{author}{title}
\setcounter{page}{1}

\begin{document}
\title[A new graph invariant arises in toric topology]{A new graph invariant arises in toric topology}

\author[S.Choi]{Suyoung Choi}
\address{Department of Mathematics, Ajou University, San 5, Woncheondong, Yeongtonggu, Suwon 443-749, Korea}
\email{schoi@ajou.ac.kr}

\author[H.Park]{Hanchul Park}
\address{Department of Mathematics, Ajou University, San 5, Woncheondong, Yeongtonggu, Suwon 443-749, Korea}
\email{hpark@ajou.ac.kr}

\thanks{
The authors were partially supported by Basic Science Research Program through the National Research Foundation of Korea(NRF) funded by the Ministry of Education, Science and Technology(NRF-2012R1A1A2044990). The first author is additionally supported by TJ Park Science Fellowship funded by POSCO TJ Park Foundation.}

\date{\today}

\subjclass{55U10, 57N65, 05C30}
\keywords{graph associahedron, toric topology, real toric variety, graph invariant, poset topology, shellable poset}

\begin{abstract}

    In this paper, we introduce new combinatorial invariants of any finite simple graph, which arise in toric topology. We compute the $i$-th (rational) Betti number and Euler characteristic of the real toric variety associated to a graph associahedron $P_{\B(G)}$. They can be calculated by a purely combinatorial method (in terms of graphs) and are named $a_i(G)$ and $b(G)$, respectively. To our surprise, for specific families of the graph $G$, our invariants are deeply related to well-known combinatorial sequences such as the Catalan numbers and Euler zigzag numbers.
\end{abstract}
\maketitle

\tableofcontents

\section{Introduction}

    For a finite simple graph $G$, we define a graph invariant called the \emph{signed $a$-number} of $G$, written as $sa(G)$, as follows:
    \begin{itemize}
        \item $sa(G)$ is the product of signed $a$-numbers of connected components of $G$. In particular, $sa(\varnothing)=1$.
        \item $sa(G)=0$ if $G$ has odd order.
        \item If $G$ is a connected graph of even order, then $sa(G)$ is given by minus the sum of signed $a$-numbers of all induced subgraphs of $G$ other than $G$ itself.
    \end{itemize}
    The \emph{$a$-number} of $G$, written as $a(G)$, is defined by the absolute value of $sa(G)$. The \emph{$i$-th $a$-number} of $G$, $a_i(G)$, is the sum of $a$-numbers of induced subgraphs of $G$ of order $2i$. The \emph{total $a$-number} $b(G)$ is the sum of signed $a$-numbers of every induced subgraphs of $G$. In Section~\ref{sec:anumber}, we compute these invariants for specific classes of graphs and present tables for them.

    These numerical invariants are derived from certain topological invariants of real toric manifolds, which are one of important objects in toric topology. A toric variety of complex dimension $n$ is a normal algebraic variety over the complex field $\C$ with an effective algebraic action of $(\C^*)^n$ having an open dense orbit, where $\C^*=\C\setminus\{0\}$. 
    A compact non-singular toric variety is called a toric manifold; the subset consisting of points with real coordinates is called a real toric manifold.

    A simple polytope $P^n$ is called a \emph{Delzant polytope} if for each vertex $p$ of $P^n$, the outward normal vectors of the facets containing $p$ can be chosen to make up an integral basis for $\Z^n$.
    Note that the normal fan of a Delzant polytope is a complete non-singular fan and thus defines a toric manifold by the fundamental theorem of toric geometry.


    There is an interesting family of Delzant polytopes called nestohedra {introduced in \cite{P05}}. Let us define some {terminology}. A \emph{building set} $\mathcal{B}$ on a finite set $S$ is a collection of nonempty subsets of $S$ such that
        \begin{enumerate}
            \item $\mathcal{B}$ contains all singletons $\{i\}$, $i\in S$,
            \item if $I, J \in \mathcal{B}$ and $I\cap J\neq\varnothing$, then $I\cup J\in\mathcal{B}$.
        \end{enumerate}
     Let $\B$ be a building set on $[n+1]=\{1,\ldots,n+1\}$. For $I\subset [n+1]$, let $\Delta_I$ be the simplex given by the convex hull of points $e_i$, $i\in I$, where $e_i$ is the $i$-th coordinate vector. Then define \emph{the nestohedron} $P_\B$ as the Minkowski sum of simplices
        $$ P_\B = \sum_{I\in\B} \Delta_I. $$
    See \cite{PRW} or Section~\ref{sec:buildingset} for details. It is well-known that every nestohedron is a Delzant polytope (for example, see \cite[Proposition 7.10]{P05}). If $G$ is a graph and $\B=\B(G)$ is a building set whose elements are obtained from connected induced subgraphs of $G$, then $P_{\B(G)}$ is called a \emph{graph associahedron}. {The notion of graph associahedra was introduced in \cite{CD} motivated by \cite{DJS}.} The class of graph associahedra includes some important families of simple polytopes, such as permutohedra $Pe^{n}$, associahedra $As^{n}$ (or Stasheff polytopes), cyclohedra $Cy^{n}$ (or Bott-Taubes polytopes) and stellohedra $St^{n}$, corresponding to the complete graphs $K_{n+1}$, the path graphs $P_{n+1}$, the circle graphs $C_{n+1}$, and the star graphs $K_{1,n}$ with $n+1$ vertices respectively. Note that star graph $K_{1,n}$ is a special kind of complete bipartite graphs $K_{m,n}$.

    Since nestohedra are Delzant polytopes,
    {we} have a toric manifold associated to the graph associahedron $P_{\B(G)}$ which is denoted by $M_\C(\B(G))$. Its real toric manifold is written as $M_\R(\B(G))$. In the toric manifold case, one can use the famous {results of Jurkiewicz \cite{Ju} and Danilov \cite{Da}} to compute the cohomology ring. In particular, the Betti numbers are given by the $h$-vector of $P_{\B(G)}$, which is a combinatorial invariant determined by number of faces of the polytope. So the problem to find the Betti numbers of $M_\C(\B(G))$ reduces to that of computing $h$-vectors of the graph associahedron $P_{\B(G)}$. See \cite{PRW}. In this paper, we focus on the real toric manifold $M_\R(\B(G))=:M(G)$. In this case, the theorem of Davis-{Januszkiewicz} \cite[Theorem~4.14]{DJ} tells only about $\Z_2$-coefficient version $H^*(M(G);\Z_2)$. Thus, we want to compute rational Betti numbers of $M(G)$.
    Hereby we present the main result:

    \begin{theorem}\label{thm:mainintro}
        Let $G$ be a graph (not necessarily connected). Then the rational Betti numbers $\beta_i(M(G))$ and the Euler characteristic $\chi(M(G))$ of $M(G)$ are
        $$
            \beta_i(M(G))= a_i(G) \text{ and } \chi(M(G)) = b(G).
        $$
    \end{theorem}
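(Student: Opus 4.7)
The plan is to combine a general formula for the rational Betti numbers of a real toric manifold with an explicit combinatorial analysis of subcomplexes of the nested set complex of $\B(G)$. The first ingredient is the formula (due to Suciu--Trevisan and refined by the authors in earlier work) expressing
\[
  \beta_i(M_\R;\Q) \;=\; \sum_{S} \tilde{\beta}_{i-1}(K_S;\Q),
\]
where $S$ ranges over an indexing set determined by the mod $2$ reduction of the characteristic function and $K_S$ is the full subcomplex of the boundary complex of the dual simplicial polytope supported on those vertices whose characteristic vector lies in $S$. For $P_{\B(G)}$, the facets are indexed by elements of $\B(G)$, i.e.\ by connected induced subgraphs of $G$, and I would pick an explicit characteristic function (for instance by rooting the elements of $\B(G)$) so that the indexing data $S$ is in natural bijection with arbitrary induced subgraphs $H \subseteq G$; the subcomplex $K_S$ can then be identified with a natural subcomplex built from $\B(H)$, which I will write as $K_H$.

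The heart of the argument is a homological computation of these $K_H$. The precise claim to establish is that the reduced rational cohomology of $K_H$ is concentrated in degree $|V(H)|/2 - 1$ with dimension equal to $a(H)$, and that the reduced Euler characteristic equals $sa(H)$. When some component of $H$ has odd order, I would exhibit $K_H$ as a cone (reflecting an unavoidable parity obstruction among the nested sets), so that it is contractible and matches $sa(H) = 0$. When every component has even order, the multiplicativity of $sa$ over components should correspond to $K_H$ being a simplicial join of the $K_{H_j}$, compatible with the K\"unneth formula for joins. The remaining case---$H$ connected of even order---I would handle by induction on $|V(H)|$, using a Mayer--Vietoris-type sequence whose signs mirror the recursion $sa(H) = - \sum_{H' \subsetneq H} sa(H')$.

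The main obstacle will be this last step: identifying the single degree in which the cohomology of $K_H$ is concentrated and matching its rank with the recursively defined invariant $a(H)$. I expect that realizing $K_H$ as homotopy equivalent to a wedge of $(|V(H)|/2 - 1)$-spheres will require either an explicit shelling of the nested set complex (in the spirit of Feichtner--Sturmfels) or a discrete Morse function tailored to the graph structure; the combinatorial recursion then has to be matched degree by degree against the topological long exact sequence. Once the claim about $K_H$ is in hand, summing $\tilde{\beta}_{|V(H)|/2-1}(K_H) = a(H)$ over induced subgraphs of order $2i$ yields $\beta_i(M(G)) = a_i(G)$, and taking the alternating sum gives $\chi(M(G)) = \sum_H sa(H) = b(G)$, completing the proof.
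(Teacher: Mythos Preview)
Your overall architecture matches the paper's closely: Suciu--Trevisan, reindexing by even subsets $T\subseteq V(G)$ (your $H$), contractibility when a component has odd order, a join/K\"unneth argument for disconnected $H$, and shellability to force homology into a single degree. Two points deserve comment.

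First, there is a genuine step you have suppressed in the sentence ``the subcomplex $K_S$ can then be identified with a natural subcomplex built from $\B(H)$.'' The full subcomplex the Suciu--Trevisan formula hands you is \emph{not} supported on $\B(H)$: its vertices are all $I\in\B(G)\setminus\{V(G)\}$ with $|I\cap T|$ odd, and most of these have $I\not\subseteq T$. The paper spends a separate lemma (removing, one at a time, the vertices $I$ with $I\setminus T\neq\varnothing$ by checking their links are cones, hence contractible) to deformation-retract onto the subcomplex $K''_T$ whose vertices are the $I\subseteq T$ with $|I|$ odd. Only after that retraction does the join decomposition over the components of $G|_T$ become visible. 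This is not hard, but it is not a mere identification, and your outline should flag it.

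Second, your plan to match the recursion by a Mayer--Vietoris-type argument is replaced in the paper by something slicker. Once homology of $K''_T$ is known to sit in a single degree (via shellability---the paper does this by passing to an explicit subdivision of $K_G^{even}$ and invoking Bj\"orner's criterion that bounded, locally semimodular posets have shellable order complexes, then Alexander-dualizing to $K_G^{odd}$), one simply observes that $M(G)$ is odd-dimensional when $|V(G)|$ is even, so $\chi(M(G))=0$. Expanding $\chi$ term by term reproduces exactly the defining recursion for $sa(G)$, and since $ta(G)$ satisfies the same initial conditions and multiplicativity, one gets $ta=a$ with no inductive long exact sequences needed. Your approach would likely work, but the Euler-characteristic shortcut is both shorter and conceptually cleaner.
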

    \begin{remark}\label{rem:bnumandeuler}
        By a result of Davis-Januskiewicz \cite{DJ}, the $\Z_2$-Betti numbers of $M_\R(\B(G))$ is equal to the $\Q$-Betti numbers of $M_\C(\B(G))$, which is given by the $h$-vector of $P_{\B(G)}$ as mentioned above. Since the Euler characteristic can be calculated using any coefficient field \cite[Exercise~3A.1]{H}, one concludes that $b(G)$ also can be obtained from the $h$-vector of $P_{\B(G)}$. See Remark~\ref{rem:kindreferee} for details.
    \end{remark}

    An amazing formula by Suciu and Trevisan \cite{Su1} to calculate rational Betti number of any real toric manifold is one of the key tools in the proof of Theorem~\ref{thm:mainintro}.
    As immediate consequence, we obtain the following corollary.
    \begin{corollary}
        If $G=K_{n+1}$ is a complete graph, then
        $$
            \beta_i(M(G))=a_i(K_{n+1})=\binom{n+1}{2i}A_{2i}
        $$
        and
        $$
            \chi(M(G))=b(K_{n+1})=\left\{
                         \begin{array}{ll}
                           0, & \hbox{if $n$ is odd;} \\
                           (-1)^{\frac n2}A_{n+1}, & \hbox{if $n$ is even,}
                         \end{array}
                       \right.
        $$
    where $A_k$ is the $k$-th Euler zigzag number.
    \end{corollary}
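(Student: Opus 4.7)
By Theorem~\ref{thm:mainintro}, it suffices to establish the combinatorial identities for $a_i(K_{n+1})$ and $b(K_{n+1})$. Every induced subgraph of $K_{n+1}$ of order $m$ is isomorphic to $K_m$, and there are $\binom{n+1}{m}$ such subgraphs, so
\[
a_i(K_{n+1})=\binom{n+1}{2i}\,a(K_{2i}),\qquad b(K_{n+1})=\sum_{m=0}^{n+1}\binom{n+1}{m}\,sa(K_m).
\]
The problem therefore reduces to two statements about the single sequence $s_k:=sa(K_k)$. Note that $s_m=0$ whenever $m>0$ is odd, since $K_m$ is then a connected graph of odd order, so the defining recursion becomes
\[
s_k=-\sum_{m=0}^{k-1}\binom{k}{m}\,s_m\qquad(k\ge 2,\ k\text{ even}),\qquad s_0=1.
\]

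The plan is to compute the exponential generating function $F(x):=\sum_{k\ge 0}s_k\,x^k/k!$. Multiplying $F$ by $\cosh(x)=\sum_{j\ge 0}x^{2j}/(2j)!$, the coefficient of $x^k/k!$ in $F(x)\cosh(x)$ is $\sum_m\binom{k}{m}s_m$ where $m$ runs over $0\le m\le k$ with $k-m$ even. For $k$ odd, only odd $m$ appear and the sum vanishes; for $k\ge 2$ even, the recursion rearranges to $\sum_{m=0}^k\binom{k}{m}s_m=0$; and $k=0$ contributes $s_0=1$. Hence $F(x)\cosh(x)=1$, i.e.\ $F(x)=\operatorname{sech}(x)$. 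Comparing with the standard expansion $\operatorname{sech}(x)=\sum_{n\ge 0}(-1)^n A_{2n}\,x^{2n}/(2n)!$ yields $sa(K_{2i})=(-1)^iA_{2i}$, hence $a(K_{2i})=A_{2i}$, and the Betti-number formula follows.

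For the Euler characteristic, observe that $b(K_{n+1})$ equals the coefficient of $x^{n+1}/(n+1)!$ in $e^xF(x)$. A direct manipulation gives $e^x\operatorname{sech}(x)=2/(1+e^{-2x})=1+\tanh(x)$, so one must read off Taylor coefficients of $1+\tanh(x)$. Since $\tanh$ is odd, $b(K_{n+1})=0$ whenever $n+1\ge 2$ is even (i.e.\ $n$ odd); and for $n+1$ odd, the expansion $\tanh(x)=\sum_{k\ge 0}(-1)^k A_{2k+1}\,x^{2k+1}/(2k+1)!$ yields $b(K_{n+1})=(-1)^{n/2}A_{n+1}$. No step is conceptually deep; the only subtlety is anticipating that multiplying $F$ by $\cosh$ telescopes the defining recursion, and tracking the $\operatorname{sech}/\tanh$ sign conventions against the Euler zigzag numbers $A_k$.
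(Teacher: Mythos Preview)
Your argument is correct and follows essentially the same exponential-generating-function route as the paper's Theorem~\ref{theoremsecant}: there one sets $F(x)=\sum_i |sa(K_{2i})|\,x^{2i}/(2i)!$ and shows $F(x)\cos x=1$, whereas you work directly with the signed sequence and obtain $F(x)\cosh x=1$. The two computations differ only by the substitution $x\mapsto ix$, and your version has the mild advantage that it does not presuppose the sign pattern $sa(K_{2i})=(-1)^i|sa(K_{2i})|$ but rather derives it from the $\operatorname{sech}$ expansion.

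Your treatment of the Euler characteristic is in fact more complete than the paper's: the identity $b(K_{2n+1})=(-1)^n A_{2n+1}$ is only stated in a remark with the proof omitted, while you supply the short argument via $e^x\operatorname{sech}(x)=1+\tanh(x)$. The vanishing of $b(K_{n+1})$ for $n$ odd also follows directly from your generating-function identity (since $\tanh$ is odd), matching the paper's general observation that $b(G)=0$ whenever $G$ has an even number of vertices.
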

    The toric variety $M_\C(\B(K_{n+1}))$ is known as a \emph{Hessenberg variety} \cite{DeMPS} and its real version $M(K_{n+1})$ is also well-studied. In particular, its rational Betti numbers have already been computed by Henderson \cite[Corollary 1.3]{Hen} using a geometrical approach. After that, Suciu \cite{Su2} also computed it using his own method. We remark that our result can be regarded as a generalization of Suciu's.

    \begin{corollary}
        If $G=P_{n+1}$ is a path graph, then
        $$
            \beta_i(M(G))=a_i(P_{n+1})=\binom{n+1}i-\binom{n+1}{i-1}
        $$
        for $1\le i\le\lfloor\frac {n+1}2\rfloor$ and
        $$
            \chi(M(G))=b(P_{n+1})=\left\{
                         \begin{array}{ll}
                           0, & \hbox{if $n$ is odd;} \\
                           (-1)^{\frac n2}\mathcal{C}_{\frac n2}, & \hbox{if $n$ is even,}
                         \end{array}
                       \right.
        $$
        where $\mathcal{C}_k=\frac 1{k+1}\binom{2k}k$ is the $k$-th Catalan number.
    \end{corollary}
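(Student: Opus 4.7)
The plan is to invoke Theorem~\ref{thm:mainintro}, which reduces the corollary to computing $a_i(P_{n+1})$ and $b(P_{n+1})$ by purely combinatorial means. Any induced subgraph of $P_{n+1}$ is specified by a subset $S\subseteq[n+1]$ and decomposes as a disjoint union of paths $P_{j_1}\sqcup\cdots\sqcup P_{j_m}$ corresponding to the maximal runs of consecutive integers in $S$. Since $sa$ is multiplicative on disjoint unions and vanishes whenever any component has odd order, everything is eventually controlled by the single sequence $\{sa(P_{2k})\}_{k\ge 0}$, so the first task I would tackle is proving $sa(P_{2k})=(-1)^k\mathcal{C}_k$.

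To do this I would set up generating functions $Q(x)=\sum_{k\ge 0} sa(P_{2k})\,x^{2k}$ (with $sa(P_0)=1$) and $R(x)=\sum_{n\ge 0}r_n\,x^n$, where $r_n=\sum_{S\subseteq[n]}sa(P_n|_S)$. Applying the symbolic method to the alternating ``in-run/out-run'' decomposition of $[n]$, with nonempty in-runs weighted by $sa(P_j)$ and out-runs by $1$, yields $R(x)=Q(x)/(1-xQ(x))$. On the other hand, the defining recursion for $sa$ forces $r_{2k}=0$ for every $k\ge 1$: indeed
\[
    sa(P_{2k}) \;=\; -\sum_{S\subsetneq[2k]}sa(P_{2k}|_S) \;=\; sa(P_{2k})-r_{2k},
\]
so $r_{2k}=0$. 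Hence $R(x)-1$ is an odd power series, and combined with $R=Q/(1-xQ)$ and the Catalan functional equation $yC(y)^2=C(y)-1$ (where $C(y)=\sum_k\mathcal{C}_k y^k$) this pins down $Q(x)=C(-x^2)$; reading off coefficients gives $sa(P_{2k})=(-1)^k\mathcal{C}_k$ and hence $a(P_{2k})=\mathcal{C}_k$.

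Next, for $a_i(P_{n+1})$ I would refine the run decomposition with a marking variable $z$ tracking the half-size of $S$. The analogous symbolic argument gives
\[
    \alpha(x,z)\;:=\;\sum_{n,i}a_i(P_n)\,x^n z^i \;=\; \frac{C(zx^2)}{1-xC(zx^2)} \;=\; \sum_{k\ge 0}x^k C(zx^2)^{k+1}.
\]
Extracting $[x^{n+1}z^i]$ via the classical ballot-number identity $[y^i]C(y)^{k+1}=\tfrac{k+1}{2i+k+1}\binom{2i+k+1}{i}$ with $k=n+1-2i$ yields $a_i(P_{n+1})=\tfrac{n+2-2i}{n+2-i}\binom{n+1}{i}$, which algebraically equals $\binom{n+1}{i}-\binom{n+1}{i-1}$. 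The Euler characteristic comes out for free from the specialization $z=-1$: since $\alpha(x,-1)=R(x)$ and $R(x)=1+xC(-x^2)$ from the previous step, the coefficient of $x^{n+1}$ in $R$ is $(-1)^{n/2}\mathcal{C}_{n/2}$ when $n$ is even and $0$ when $n$ is odd.

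The main obstacle is the passage from the defining recursion of $sa$ to the closed form $sa(P_{2k})=(-1)^k\mathcal{C}_k$, because the recursion expresses $sa(P_{2k})$ as an alternating sum over \emph{every} induced subgraph and the Catalan pattern is not easy to spot by direct induction. The generating-function reformulation bypasses this difficulty: the ``all components even'' condition becomes the clean parity statement that $R(x)-1$ is odd, and the remaining algebraic system is solved in one line using the Catalan quadratic identity. Once $sa(P_{2k})$ is known, the $a_i$ and $b$ computations reduce to routine coefficient extraction.
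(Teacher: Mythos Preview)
Your proposal is correct and takes a genuinely different route from the paper. Both arguments invoke Theorem~\ref{thm:mainintro} to reduce the problem to computing $a_i(P_{n+1})$ and $b(P_{n+1})$, but the combinatorics is handled very differently. The paper (Theorem~\ref{thm:anumberforpath}) argues bijectively: for $sa(P_{2n})$ it conditions on the first two omitted vertices $a<b$, observes that all contributions vanish unless $b=2n$ and $a$ is odd, and reads off the Catalan convolution $-sa(P_{2n})=\sum_j sa(P_{2j})\,sa(P_{2n-2-2j})$ directly; for $a_i(P_n)$ it sets up an explicit bijection between a ``bad'' family $B_i\subset\binom{[n]}{i}$ and $\binom{[n]}{i-1}$, then interprets the multiplicity $a(P_n|_{\bar I})$ as a parenthesization count to finish. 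Your approach replaces both steps by generating functions: the run decomposition yields $R=Q/(1-xQ)$, the defining recursion forces the even coefficients of $R$ to vanish, and the resulting system collapses (via $R=1+xQ$ and $x^2Q^2+Q-1=0$) to $Q=C(-x^2)$; the bivariate refinement together with the ballot identity $[y^i]C(y)^{r}=\tfrac{r}{2i+r}\binom{2i+r}{i}$ then gives $a_i(P_{n+1})$, and the specialization $z=-1$ recovers $b(P_{n+1})$ for free. What the paper's approach buys is an explicit bijection and a direct link to the parenthesization model of Catalan numbers; what yours buys is a uniform mechanism that handles $sa$, $a_i$, and $b$ in one framework, with the ``every component even'' condition appearing cleanly as a parity constraint on $R$. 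Either argument proves the corollary in full.
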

    One can find the list of combinatorial interpretations of $\mathcal{C}_n$ developed by R.~Stanley at \texttt{http://www-math.mit.edu/\textasciitilde rstan/ec/.}
    It is noted that $a_n(P_{2n}) = |b(P_{2n+1})|$ is the $n$-th Catalan number $\mathcal{C}_n$. Since $a_i(G)$ and $b(G)$ are calculated in a purely combinatorial way, this result has been included recently in Stanley's list as a new combinatorial interpretation of the Catalan numbers (see \cite[C.6C]{Stan}).

    \begin{corollary}
        If $G=C_{n+1}$ is a cycle graph, then
        $$
            \beta_i(M(G))=a_i(C_{n+1})=\left\{
                                         \begin{array}{ll}
                                           \binom{n+1}i, & \hbox{if $2i< n+1$;} \\
                                           \frac 12\binom{2i}i, & \hbox{if $2i= n+1$.}
                                         \end{array}
                                       \right.
        $$
        and
        $$
            \chi(M(G))=b(C_{n+1})=\left\{
                         \begin{array}{ll}
                           0, & \hbox{if $n$ is odd;} \\
                           (-1)^{\frac n2}\binom{n}{n/2}, & \hbox{if $n$ is even.}
                         \end{array}
                       \right.
        $$
    \end{corollary}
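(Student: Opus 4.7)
The plan is to invoke Theorem~\ref{thm:mainintro}, which reduces both assertions to the purely combinatorial identities
\[
a_i(C_{n+1})=\begin{cases}\binom{n+1}{i}, & 2i<n+1,\\[2pt]
\tfrac{1}{2}\binom{2i}{i}, & 2i=n+1,\end{cases}
\qquad
b(C_{n+1})=\begin{cases}0, & n\text{ odd},\\[2pt]
(-1)^{n/2}\binom{n}{n/2}, & n\text{ even}.\end{cases}
\]
I would deduce these from the previously established formulas for paths via a short double counting argument and the standard partial sum identity $\sum_{j=0}^{k}(-1)^j\binom{N}{j}=(-1)^k\binom{N-1}{k}$.

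For $2i<n+1$, every size $2i$ subset $S\subseteq V(C_{n+1})$ is proper, so $\overline S\neq\varnothing$. I would count pairs $(S,v)$ with $|S|=2i$ and $v\in\overline S$ in two ways. Summing over $S$ first gives $(n+1-2i)\,a_i(C_{n+1})$. Summing over $v$ first uses the key observation that removing $v\notin S$ from $C_{n+1}$ turns it into a path $P_n$ without altering $C_{n+1}[S]$, which by the cyclic symmetry of the $n+1$ vertices produces $(n+1)\,a_i(P_n)$. Substituting $a_i(P_n)=\binom{n}{i}-\binom{n}{i-1}$ from the path corollary and using the algebraic identity $\binom{n}{i}-\binom{n}{i-1}=\frac{n+1-2i}{n+1}\binom{n+1}{i}$ collapses everything to $a_i(C_{n+1})=\binom{n+1}{i}$.

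For the boundary case $2i=n+1$, the only induced subgraph of order $2i$ is $C_{n+1}$ itself, so $a_i(C_{n+1})=a(C_{n+1})$. A short induction on $k$ via the path corollary and the defining recursion of $sa$ yields $sa(P_{2k})=(-1)^k\mathcal{C}_k$, so that $sa(C_{n+1}[S])=(-1)^{|S|/2}\,a(C_{n+1}[S])$ on every proper $S$. The recursion for $sa$ together with the formula from the first step then becomes $sa(C_{n+1})=-\sum_{j=0}^{i-1}(-1)^j\binom{2i}{j}$, which the partial sum identity evaluates to $\pm\binom{2i-1}{i-1}=\pm\tfrac{1}{2}\binom{2i}{i}$, giving the claimed value after taking absolute values.

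The Euler characteristic splits by parity. If $n$ is odd, every component of $C_{n+1}$ has even order, so by the definition of $sa$ one has $\sum_{H\subseteq C_{n+1}}sa(H)=0$ and therefore $b(C_{n+1})=0$. If $n$ is even then $sa(C_{n+1})=0$ by the odd order rule, and the same sign identity converts $b(C_{n+1})=\sum_{H}sa(H)$ into $\sum_{j=0}^{n/2}(-1)^j\binom{n+1}{j}=(-1)^{n/2}\binom{n}{n/2}$ by a second application of the partial sum identity. I expect the only real difficulty to be the double counting step — verifying that removing $v\in\overline S$ truly preserves $C_{n+1}[S]$ edge for edge — together with careful sign bookkeeping in the boundary case; no new machinery is required beyond the path corollary and two standard binomial identities.
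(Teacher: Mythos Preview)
Your argument is correct and genuinely different from the paper's. The paper (Theorem~\ref{thm:anumberforcircle}) computes $a_i(C_n)$ for $2i<n$ by adapting the bijective $\bar X$ construction from the path case directly to the cycle, and obtains $sa(C_{2n})$ by a separate ad hoc count which yields the closed relation $sa(C_{2n})=-(2n-1)\,sa(P_{2n-2})$; the value of $b(C_{2n+1})$ is only stated without proof in a later remark. You instead bootstrap everything from the path formula: the double count of pairs $(S,v)$ with $v\notin S$ gives the clean identity $(n+1-2i)\,a_i(C_{n+1})=(n+1)\,a_i(P_n)$, which together with $\binom{n}{i}-\binom{n}{i-1}=\tfrac{n+1-2i}{n+1}\binom{n+1}{i}$ immediately yields $a_i(C_{n+1})=\binom{n+1}{i}$; and then both $sa(C_{2i})$ and $b(C_{n+1})$ fall out of the defining recursion via the alternating partial sum $\sum_{j\le k}(-1)^j\binom{N}{j}=(-1)^k\binom{N-1}{k}$. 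Your route is more uniform---one reduction mechanism and one binomial identity handle all three claims---while the paper's approach gives the slightly sharper structural statement $sa(C_{2n})=-(2n-1)\,sa(P_{2n-2})$ that your method does not see directly. The one step you flag as potentially delicate (that deleting $v\notin S$ from $C_{n+1}$ leaves $C_{n+1}[S]$ unchanged) is indeed immediate, since the only edges lost are the two incident to $v$.
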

    \begin{corollary}
        If $G=K_{1,n}$ is a star graph, then
        $$
            \beta_i(M(G))=a_i(K_{1,n})=\binom{n}{2i-1}A_{2i-1}
        $$
        for $i\ge 1$ and
        $$
            \chi(M(G))=b(K_{1,n})=\left\{
                         \begin{array}{ll}
                           0, & \hbox{if $n$ is odd;} \\
                           (-1)^{\frac n2}A_{n}, & \hbox{if $n$ is even,}
                         \end{array}
                       \right.
        $$
    where $A_k$ is the $k$-th Euler zigzag number.
    \end{corollary}

    This paper is organized as follows. In Section~\ref{sec:anumber}, we define our graph invariants containing the signed and unsigned $a$-numbers, the $i$-th $a$-numbers, and the total $a$-numbers. Furthermore, we compute them for specific classes of graphs such as $P_n$, $C_n$, $K_n$, and $K_{1,n-1}$, and give tables for them. In Section~\ref{sec:pre}, we recall the definition of small covers and introduce the formula of Suciu-Trevisan. We also review nestohedra and graph associahedra. In Section~\ref{sec:kgeven}, we introduce the simplicial complex $K_G^{\mathrm{even}}$ whose topology is essential to the computation. We also introduce a subdivision of $K_G^{\mathrm{even}}$ that is shellable, which implies $K_G^{\mathrm{even}}$ is homotopy equivalent to a wedge sum of spheres of the same dimension. Finally, in Section~\ref{sec:betti}, we prove Theorem~\ref{thm:mainintro}.

\section{$a$-numbers: definition and examples}\label{sec:anumber}
    Throughout this paper, every graph is assumed to be finite, undirected, and simple. We start by defining our invariant, called the \emph{$a$-number}. For a graph $G$, the set of vertices and edges are denoted by $V(G)$ and $E(G)$, respectively.

    \begin{definition}
        Let $G$ be a graph. The \emph{signed $a$-number} of $G$, or $sa(G)$, is defined recursively by the following conditions:
        \begin{itemize}
            \item $sa(G)=\prod_{i=1}^\ell sa(G_i)$ if $G_1,\ldots, G_\ell$ are components of $G$. In particular, $sa(\varnothing) = 1$.
            \item If $G$ is connected, then:
        \begin{equation} \label{rec}
            sa(G) = \left\{
                      \begin{array}{ll} \displaystyle
                        -\sum_{I\subsetneq V(G)}sa(G|_I), & \hbox{if $G$ has even order;} \\
                        0, & \hbox{otherwise,}
                      \end{array}
                    \right.
        \end{equation}
        where $G|_I$ is the full subgraph of $G$ induced by $I$, i.e., $V(G|_I) = I$ and $E(G|_I) = \{ \{v,w\} \in E(G) \mid v, w \in I\}$.
        \end{itemize}
%

        The \emph{$a$-number} or \emph{unsigned $a$-number} of $G$, denoted by $a(G)$, is the absolute value of $sa(G)$. The \emph{$i$-th $a$-number} of $G$ or $a_i(G)$ is defined by the sum
        $$
            a_i(G):=\sum_{\atopp{I\subseteq V(G)}{|I|=2i}}a(G|_I).
        $$
        Note that $a_1(G)$ is the number of edges of $G$.

        The \emph{total $a$-number} of $G$, or $b(G)$, is the whole sum of signed $a$-numbers of all induced subgraphs, that is
        $$
            b(G):=\sum_{I\subseteq V(G)}sa(G|_I).
        $$
    \end{definition}
    \begin{remark}
        Even though it seems nontrivial from the definition, the relation $$sa(G)=(-1)^{|\frac{V(G)}2|}a(G)$$
        holds. As we shall see in the proof of Theorem~\ref{thm:mainintro}, this is an obvious fact from topological viewpoint. Assuming this relation, it is easy to see that
            $$
                b(G)=\sum_{i=0}^{\lfloor \frac{V(G)}2 \rfloor} (-1)^i a_i(G).
            $$
    \end{remark}
    \begin{remark}\label{rem:kindreferee}
        As we have seen in Remark~\ref{rem:bnumandeuler}, $b(G)$ can be computed from the $h$-vector of $P_{\B(G)}$. More precisely, when $G$ is a graph with $2k+1$ vertices, the following
        \[
            b(G) = f(G,-2) = h(G,-1) = (-1)^k\times\text{coeff of }t^k\text { in }\gamma(G,t)
        \]
        holds where $f(G,t),\,h(G,t),\,\gamma(G,t)$ denote the $f$-, $h$-, $\gamma$-polynomials of the polytope $P_{\B(G)}$. This can be proven by checking that $b(G)$ satisfies the recurrence relations of \cite[Theorem~7.11]{P05}.
    \end{remark}
    \begin{remark}
        If $G$ is a connected graph with $2n$ vertices, $n\ge 1$, then $a_n(G)=a(G)$ and $b(G)=0$. Therefore, if $b(G)$ is nonzero, then $G$ has odd order.
    \end{remark}

    The rest of this section is devoted to calculating $a$-numbers of some important examples of graphs, such as path graphs, complete graphs, star graphs, and cycle graphs.

    \begin{theorem}\label{thm:anumberforpath}
        Let $G=P_n$ be the path graph with $n$ vertices. Then
        $$
            sa(P_{2n})=(-1)^n\frac1{n+1}\binom{2n}{n}
        $$
        is the $n$-th Catalan number up to sign. More generally the following holds:
        $$
            a_i(P_n)=\binom{n}{i} - \binom{n}{i-1}
        $$
        for $1\le i\le\lfloor\frac n2\rfloor$.
    \end{theorem}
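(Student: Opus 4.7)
The plan is to prove the signed Catalan formula $sa(P_{2n}) = (-1)^n\mathcal{C}_n$ first, by induction on $n$ directly from the recursive definition of $sa$, and then deduce the formula for $a_i(P_n)$ as an essentially immediate consequence. The key combinatorial tool is the identity $(\star)$: for $0 \le i \le \lfloor n/2 \rfloor$,
$$\sum_I \prod_k \mathcal{C}_{a_k/2} \;=\; \binom{n}{i} - \binom{n}{i-1},$$
where the sum ranges over those $I \subseteq \{1,\ldots,n\}$ with $|I|=2i$ whose maximal consecutive runs (of lengths $a_1,a_2,\ldots$) are all of even size. I would prove $(\star)$ by an explicit bijection with the $\binom{n}{i} - \binom{n}{i-1}$ ballot sequences of length $n$ having $i$ ones and every prefix satisfying (number of zeros) $\ge$ (number of ones): from a pair $(I, D)$, where $D$ chooses one of the $\mathcal{C}_{a_k/2}$ Dyck paths on each component of $P_n|_I$, emit $1$ at each ``down'' step of $D$ and $0$ elsewhere; the inverse is the standard nearest-unmatched-parenthesis matching, whose matched positions (as one checks) automatically form maximal consecutive runs of even length supporting Dyck-path structure.

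For the first assertion, assume inductively $sa(P_{2k}) = (-1)^k\mathcal{C}_k$ for all $k < n$, so that $sa(P_m) = 0$ for $m$ odd and $sa(P_{2n}|_I) = \prod_k sa(P_{a_k}) = (-1)^{|I|/2}\prod_k \mathcal{C}_{a_k/2}$ whenever every component of $P_{2n}|_I$ has even size (and $0$ otherwise). Since the only subset of size $2n$ contributing is $I = V(P_{2n})$ itself, which is excluded from the defining sum, applying $(\star)$ gives
$$sa(P_{2n}) \;=\; -\!\!\sum_{I \subsetneq V(P_{2n})}\! sa(P_{2n}|_I) \;=\; -\sum_{i=0}^{n-1}(-1)^i\Bigl[\binom{2n}{i} - \binom{2n}{i-1}\Bigr].$$
The target $sa(P_{2n}) = (-1)^n\mathcal{C}_n = (-1)^n\bigl[\binom{2n}{n} - \binom{2n}{n-1}\bigr]$ is therefore equivalent to the orthogonality $\sum_{i=0}^n(-1)^i\bigl[\binom{2n}{i} - \binom{2n}{i-1}\bigr] = 0$, which follows from the classical partial sum $\sum_{k=0}^m(-1)^k\binom{N}{k} = (-1)^m\binom{N-1}{m}$ together with the binomial symmetry $\binom{2n-1}{n} = \binom{2n-1}{n-1}$.

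Once $sa(P_{2k}) = (-1)^k\mathcal{C}_k$ is established for every $k$, we have $a(P_{2k}) = \mathcal{C}_k$ and $a(P_m) = 0$ for $m$ odd. Applying this componentwise, $a(P_n|_I) = \prod_k \mathcal{C}_{a_k/2}$ for any $I$ all of whose components have even size, and thus
$$a_i(P_n) \;=\; \sum_{|I|=2i} a(P_n|_I) \;=\; \sum_I \prod_k \mathcal{C}_{a_k/2} \;=\; \binom{n}{i} - \binom{n}{i-1},$$
where the middle sum restricts to even-component $I$ (the only nonzero contributors) and the final equality is $(\star)$. The special case $n = 2k$, $i = k$ recovers $a(P_{2k}) = \mathcal{C}_k$ as promised.

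The main technical step is the bijection underlying $(\star)$, in particular the claim that in any valid ballot sequence the set of matched positions decomposes into maximal consecutive intervals of even length each carrying Dyck-path structure; proving this requires a short stack-based analysis of the matching procedure to rule out any ``hole'' (unmatched zero) occurring strictly between two matched positions of the same interval. Everything else is bookkeeping with binomial identities.
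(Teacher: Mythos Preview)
Your proposal is correct, and the parenthesis-matching bijection behind your identity $(\star)$ is essentially the same tool the paper uses for the $a_i$ formula (the paper packages it as the ``doubling'' map $X\mapsto\bar X$ together with the bijection $B_i\leftrightarrow A_{i-1}$, but unwinding it gives exactly your ballot-sequence/nearest-unmatched-zero matching, including the barrier property you allude to). Where you genuinely diverge is in the proof of $sa(P_{2n})=(-1)^n\mathcal C_n$. The paper does \emph{not} go through $(\star)$ and an alternating binomial identity; instead it fixes the first two vertices $a<b$ of $P_{2n}$ that are \emph{excluded} from $I$ and observes that the contribution of all such $I$ factors as $sa(P_{a-1})\cdot sa(P_{b-a-1})\cdot b(P_{2n-b})$. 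Since $b(P_{2m})=0$ for $m\ge 1$, only $b=2n$ survives, and summing over odd $a$ yields the Segner recurrence
\[
-sa(P_{2n})=\sum_{j=0}^{n-1} sa(P_{2j})\,sa(P_{2n-2-2j})
\]
directly. So the paper obtains the Catalan recurrence for free and then invokes the bijection only for $a_i$, whereas you prove the bijection first and squeeze the Catalan value out of it via $\sum_{i=0}^n(-1)^i\bigl[\binom{2n}{i}-\binom{2n}{i-1}\bigr]=0$. Your route is more uniform (one lemma $(\star)$ does all the work); the paper's route is more transparent for the first assertion, exhibiting the classical Catalan recursion with no binomial manipulation.
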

    \begin{proof}
        First, we verify the first formula. Put $G=P_{2n}$ and assume that $V(G)=[2n]=\{1,\ldots,2n\}$ and the edges are of the form $\{k,k+1\}$, $1\le k\le 2n-1$. To compute $sa(P_{2n})$ we must check out every induced subgraph of $G$ whose signed $a$-number is nonzero. Pick two vertices of $G$, named $v$ and $w$. We can assume that $1\le v < w \le 2n$. Let $I\subseteq [2n]$ be a subset of $[2n]$ and suppose that $v$ and $w$ are the first two vertices of $G$ which are not contained in $I$, that is, $\{1,2,\ldots, v-1, v+1, v+2,\ldots, w-2, w-1\}\subset I$ and $v,w\notin I$.

        Now, consider the sum of signed $a$-numbers of $G|_I$ for $I$ satisfying the above conditions, denoted by $S(v,w)$. Observe that $S(v,w)=sa(P_{v-1})\cdot sa(P_{w-v-1})\cdot b(P_{2n-w})$. We only need to consider the cases $v$ is odd and $w$ is even. Then $2n-w$ is even and $b(P_{2n-w})$ is zero unless $w=2n$. In conclusion, summing $S(v,w)$ whenever $v$ is odd and $w=2n$ gives us the result
        $$
            -sa(P_{2n})=sa(P_0)sa(P_{2n-2})+sa(P_2)sa(P_{2n-4})+\dotsb+sa(P_{2n-2})sa(P_0),
        $$
        which is the famous recurrence relation for the Catalan number (except the signs). Therefore the first part of the theorem is proven.

        For the second part, assume that $V(P_n)=[n]=\{1,\ldots,n\}$ and the edges are of the form $\{k,k+1\}$, $1\le k\le n-1$.
        Suppose $X=\{x_1,\ldots,x_i\}$ be a subset of $[n]$. Define $\bar{X}=\{x_1,\ldots,x_i,x'_1,\ldots,x'_i\}\subset \Z$ be the unique set satisfying the following conditions:
            \begin{enumerate}
                \item $|\bar{X}|=2i$.
                \item If $k$ is an integer between $x_a$ and $x'_a$, then $k\in\bar{X}$.
                \item $x'_a<x_a$ for all $1\le a \le i$.
            \end{enumerate}

        Let $A_i$ be the set of subsets of $[n]$ with cardinality $i$. Then $|A_i|=\binom{n}{i}$. Let $B_i$ be the set of $X\in A_i$ such that the minimum of $\bar{X}$ is non-positive. We claim that $|B_i|= \binom n{i-1}$. To prove it, we give a one-to-one correspondence from $B_i$ to $A_{i-1}$. Suppose $X=\{x_1,\ldots,x_i\}\in A_i$ and $x_1<\dotsb <x_i$. Then $X\in B_i$ if and only if $x_j\le 2j-1$ for some $j$. Actually the equality holds since if $x_j < 2j-1$, then $x_{j-1} \le 2j-3=2(j-1)-1$ and we could assume $j$ was minimal. So, let $j$ be the minimal index such that $x_j=2j-1$. Now define $f:B_i\to A_{i-1}$ by $f(X)=(X\setminus [2j-1])\cup([2j-1]\setminus X)$. Now, we consider the inverse of $f$, say $g$. Suppose $Y\in A_{i-1}$. If $1\notin Y$, then $g(Y)= Y\cup\{1\}$. If $1\in Y$ and $Y$ does not contain any of 2 or 3, then $g(Y)=Y\cup\{2,3\}\setminus\{1\}$. In general, there is a $j$ such that $|Y\cap[2j-1]| = j-1$. Take the minimal $j$ and define $g(Y):=(Y\setminus [2j-1])\cup([2j-1]\setminus Y)$. It is an easy exercise to show that $g$ is well-defined and $g=f^{-1}$. Note that if $|Y\cap[2j-1]| = j-1$ and $2j-1\in Y\cap[2j-1]$, then $Y\cap [2j-2]$ has $j-2$ elements and $j$ cannot be minimal no matter whether $Y$ contains $2j-2$ or not.

        Now we consider the elements of $A_i\setminus B_i$. For any $I\in A_i\setminus B_i$, the induced subgraph $P_n|_{\bar I}$ has no component of odd order. We claim that $a(P_n|_{\bar I})$ is equal to the number of $J$'s such that $\bar J = \bar I$. It is enough to check it for the case that $G_{\bar I}$ is connected. That is, we should count the number of $J$'s such that $\bar I=\bar J$ when $\bar I=[2k]$ for some $k$. But it is exactly the $k$-th Catalan number. To show it, for example, consider the function $t: \bar I\to \{(,)\}$ such that
        $$t(x)=\left\{
          \begin{array}{ll}
            (, & \hbox{if $x\notin J$;} \\
            ), & \hbox{if $x\in J$.}
          \end{array}
        \right.$$
        Recall that the $k$-th Catalan number counts the number of correct expressions of $k$ pairs of parentheses. Since we already have shown that $a(P_{2k})$ is the $k$-th Catalan number, the claim is proven, completing the proof.
    \end{proof}
    \begin{table}
    \centering
    \begin{tabular}{|c|cccccc|}
      \hline
      $a_i(P_{n})$ & $i=0$ & 1 & 2 & 3 & 4 & 5  \\ \hline
      $n=0$ & 1  & & & & &  \\
      1 & 1 & & & & &   \\
      $2$ & 1 & 1 &  &  &  &    \\
      3 & 1 & 2 &  &  &  &    \\
      4 & 1 & 3 & 2 &  &  &    \\
      5 & 1 & 4 & 5 &  &  &    \\
      6 & 1 & 5 & 9 & 5 &  &    \\
      7 & 1 & 6 & 14 & 14 &  &     \\
      8 & 1 & 7 & 20 & 28 & 14 &    \\
      9 & 1 & 8 & 27 & 48 & 42 &     \\
      10 & 1 & 9 & 35 & 75 & 90 & 42    \\
      \hline
    \end{tabular}
    \medskip
    \caption{Values of $a_i(P_n)$ make up a Catalan triangle.}\label{pathtable}
    \end{table}

    The bisequence $a_i(P_n)$ turns out to be the famous \emph{Catalan triangle}, A008315 of \cite{O}. See Table \ref{pathtable}.


    \begin{theorem}\label{thm:anumberforcircle}
        Let $G=C_n$ be the cycle graph with $n$ vertices. Then
        $$
            sa(C_{2n})=(-1)^n\binom{2n-1}{n-1} = (-1)^n \frac12\binom{2n}n,
        $$
        i.e., it is the half of the $n$-th central binomial coefficient up to sign. Moreover,
        $$
            a_i(C_n)=\binom ni
        $$
        if $i<\frac n2$.
    \end{theorem}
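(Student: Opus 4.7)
For the first formula, I apply the recursion~\eqref{rec} to $C_{2n}$ and parametrize each proper induced subgraph $C_{2n}|_I$ by its complement $R = V(C_{2n}) \setminus I$. For nonempty $R$, set $v_0 = \min R$ and $v_1 = \max R$. When $|R|=1$ we have $C_{2n}|_I = P_{2n-1}$, with vanishing $sa$. When $|R| \ge 2$, $C_{2n}|_I$ decomposes into an ``outer'' path $P_{a+c}$ on $\{1,\ldots,v_0-1\} \cup \{v_1+1,\ldots,2n\}$ (where $a=v_0-1$, $c=2n-v_1$; this is a single path in every sub-case, since the wrap-around edge $\{2n,1\}$ fails to lie in $C_{2n}|_I$ exactly when $v_0=1$ or $v_1=2n$, in which case one of $a,c$ vanishes) together with an induced subgraph on $\{v_0+1,\ldots,v_1-1\}$ ranging freely over subsets of the inner path $P_b$ with $b = v_1-v_0-1$. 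Summing over this inner subset yields $sa(P_{a+c}) \cdot b(P_b)$ from each pair $(v_0, v_1)$.

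The crucial point is that $b(P_b) = 0$ for every even $b \ge 2$: indeed, for any connected graph $G$ of even order, the recursion gives $\sum_{I\subseteq V(G)} sa(G|_I) = sa(G) + \sum_{I\subsetneq V(G)} sa(G|_I) = sa(G) - sa(G) = 0$. Since $a+b+c = 2n-2$ is even and $sa(P_{a+c}) \ne 0$ requires $a+c$ (hence $b$) even, only $b=0$ survives, where $b(P_0) = 1$. Summing over the $2n-1$ adjacent pairs $(v_0, v_1) = (v_0, v_0+1)$ gives $-sa(C_{2n}) = (2n-1) \cdot sa(P_{2n-2}) = (2n-1)(-1)^{n-1}\mathcal{C}_{n-1}$ by Theorem~\ref{thm:anumberforpath}, and the identity $(2n-1)\mathcal{C}_{n-1} = \binom{2n-1}{n-1} = \frac{1}{2}\binom{2n}{n}$ finishes the first formula.

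For the second formula with $i < n/2$, the same parametrization expresses $a_i(C_n)$ as a sum over subsets $R \subseteq V(C_n)$ of size $n-2i$ with all cyclic gap lengths $(p_1,\ldots,p_{n-2i})$ even, weighted by $\prod_j \mathcal{C}_{p_j/2}$. I double-count triples $(r \in R, R, M)$, where $M$ is a non-crossing matching on each arc: summing over $r\in R$ first yields $(n-2i) \cdot a_i(C_n)$, while fixing $r \in V(C_n)$ and using cyclic symmetry gives $n \cdot [z^i]\,F(z)^{n-2i}$, where $F(z) = \sum_{j\ge 0} \mathcal{C}_j z^j$ is the Catalan generating function---since, for fixed $r$, each $R\ni r$ is uniquely determined by its ordered cyclic gap sequence starting just after $r$, and summing $\prod_j \mathcal{C}_{p_j/2}$ over such $R$ with all $p_j$ even equals $[z^i]\,F(z)^{n-2i}$ by definition of $F$. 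Applying the classical identity $[z^i]\,F(z)^k = \frac{k}{k+2i}\binom{k+2i}{i}$, provable by Lagrange inversion on $H(z) = F(z)-1$ satisfying $H = z(1+H)^2$ (or combinatorially by the cycle lemma), yields $a_i(C_n) = \binom{n}{i}$. The main technical obstacle is this Catalan power identity: though classical, it invokes Lagrange inversion or a separate cycle-lemma bijection as an external input.
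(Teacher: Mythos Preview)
Your proof is correct. For the first formula, your argument is essentially the paper's: both reduce the recursion to $-sa(C_{2n}) = (2n-1)\,sa(P_{2n-2})$ by observing that only pairs of \emph{adjacent} removed vertices contribute nontrivially, the key input being $b(P_b)=0$ for even $b\ge 2$. The only cosmetic difference is the parametrization---you take $v_0=\min R$, $v_1=\max R$ in the linear order, while the paper splits into cases according to whether vertex~$1$ is chosen and then picks two cyclic boundary points---but the mechanism is identical.

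For the second formula, your route genuinely diverges from the paper's. The paper argues by direct analogy with its bijective proof for $P_n$: it defines a cyclic version of the map $X\mapsto\bar X$ (each $x_a$ paired with the next vertex $x'_a$ counterclockwise), notes that for $i<n/2$ the set $\bar X$ is always a proper subset of $V(C_n)$ so the ``bad'' set $B_i$ is empty, and concludes $a_i(C_n)=|A_i|=\binom{n}{i}$ by the same Catalan counting of preimages as in the path case. Your approach instead rewrites $a_i(C_n)$ as a weighted sum over cyclic gap compositions, applies a double-count over a marked element $r\in R$ to linearize the problem, and invokes the Catalan convolution identity $[z^i]F(z)^k=\frac{k}{k+2i}\binom{k+2i}{i}$. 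Both are valid; the paper's bijection is more self-contained (it reuses machinery already built for $P_n$), while yours is cleaner once the Lagrange-inversion/cycle-lemma identity is granted, and makes the role of the Catalan weights more transparent. As you note, the cost of your method is importing that identity as an external fact.
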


    \begin{proof}
        As in the proof of the previous theorem, we may show that $a_i(C_n)=\binom ni$ if $i<\frac n2$. Indeed, noticing that $G|_{\bar X}$ is a proper subgraph of $G$, everything works the same way, except that $x'_a$ should be in counterclockwise (or clockwise) direction when seen from $x_a$ in $G|_{\bar X}$.

        So we are left with computing $sa(C_{2n})$. We assume the vertices are named $1, 2, \ldots, 2n$, and the edges connect $j$ and $j+1$ mod $2n$. First, if we do not choose the vertex 1, consider this condition: we do not choose 1 and $j$ and we choose every vertices $2, 3, \ldots, j-1$. We freely choose or not the vertices $j+1, j+2, \ldots, 2n$. Each choice gives a subset $I$ of the vertex set. For fixed $j$, we can compute the sum of $a$-numbers on $C_{2n}|_I$ for $I$ satisfying above condition. Similarly to the previous theorem, only the case $j=2n$ is nontrivial. If we do choose 1, we have two vertices $a$ and $b$ such that $1<a<b\le 2n$ and $1,2,\ldots,a-1$ and $b+1,b+2,\ldots, 2n$ are chosen and $a$ and $b$ are not chosen. Then we have nontrivial contributions only if $a$ and $b$ are adjacent, i.e., $b=a+1$, $2\le a\le 2n-1$. Therefore we have
        \begin{align*}
            sa(C_{2n})&=-(2n-1)sa(P_{2n-2})=-(2n-1)\cdot (-1)^{n-1}\frac{1}n\binom{2n-2}{n-1}\\
                    &=(-1)^n\binom{2n-1}n.
        \end{align*}
    \end{proof}
    The bisequence $a_i(C_n)$ makes `half' of Pascal's triangle or A008314 of \cite{O}. See Table \ref{cycletable}.

    \begin{table}
    \centering
    \begin{tabular}{|c|cccccc|}
      \hline
      $a_i(C_{n})$ & $i=0$ & 1 & 2 & 3 & 4 & 5  \\ \hline
      $n=0$ & 1  & & & & &  \\
      1 & 1 & & & & &   \\
      $2$ & 1 & 1 &  &  &  &    \\
      3 & 1 & 3 &  &  &  &    \\
      4 & 1 & 4 & 3 &  &  &    \\
      5 & 1 & 5 & 10 &  &  &    \\
      6 & 1 & 6 & 15 & 10 &  &    \\
      7 & 1 & 7 & 21 & 35 &  &     \\
      8 & 1 & 8 & 28 & 56 & 35 &    \\
      9 & 1 & 9 & 36 & 84 & 126 &     \\
      10 & 1 & 10 & 45 & 120 & 210 & 126    \\
      \hline
    \end{tabular}
    \medskip
    \caption{Values of $a_i(C_n)$.}\label{cycletable}
    \end{table}

    \begin{definition}
        Let $\{A_n\}$ be the sequence given by
        $$
            \sec x + \tan x = \sum_{n=0}^\infty A_n \frac{x^n}{n!}.
        $$
        The numbers $A_n$ are known as \emph{Euler zigzag numbers}. The numbers $A_{2i}$ with even indices are called \emph{secant numbers} and $A_{2i+1}$ with odd ones are \emph{tangent numbers}.
    \end{definition}
   A permutation $\sigma$ of $[n]$ is called \emph{alternating} if $\sigma(2i-1)<\sigma(2i)$ and $\sigma(2j)>\sigma(2j+1)$ for all $i$ and $j$. In fact, the Euler zigzag number $A_n$ is the number of \emph{alternating permutations} of $[n]$.

    \begin{theorem}\label{theoremsecant}
        Let $G=K_n$ be the complete graph with $n$ vertices. Then
        $$
            sa(K_{2n})=(-1)^n A_{2n},
        $$
        where $A_{2n}$ is the secant number. In general,
        $$
            a_i(K_n)=\binom n{2i} A_{2i}.
        $$
    \end{theorem}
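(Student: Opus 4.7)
The plan is to exploit the fact that every induced subgraph of $K_n$ on $k$ vertices is again a complete graph $K_k$, which collapses the recursion \eqref{rec} into a simple numerical recurrence depending only on $|I|$. Concretely, since there are $\binom{2n}{k}$ induced subgraphs of $K_{2n}$ of order $k$, and $sa(K_k)=0$ when $k$ is odd, I would first write
$$
    sa(K_{2n}) \;=\; -\sum_{k=0}^{2n-1}\binom{2n}{k}\,sa(K_k) \;=\; -\sum_{j=0}^{n-1}\binom{2n}{2j}\,sa(K_{2j}),
$$
which, using $sa(K_0)=1$, is equivalent to
$$
    \sum_{j=0}^{n}\binom{2n}{2j}\,sa(K_{2j}) \;=\; 0 \qquad (n\ge 1).
$$

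Next I would recall the recursion satisfied by the secant numbers. Since $\sec x \cdot \cos x = 1$, expanding both factors and reading off the coefficient of $x^{2n}/(2n)!$ gives
$$
    \sum_{j=0}^{n}(-1)^{n-j}\binom{2n}{2j}\,A_{2j} \;=\; 0 \qquad (n\ge 1),
$$
equivalently $\sum_{j=0}^{n}\binom{2n}{2j}(-1)^{j}A_{2j}=0$ for $n\ge 1$. Setting $F_j := (-1)^j A_{2j}$, the sequence $\{F_j\}$ satisfies exactly the same linear recurrence as $\{sa(K_{2j})\}$ together with the same initial condition $F_0=1=sa(K_0)$. Since the recurrence determines the sequence uniquely, we conclude $sa(K_{2n})=(-1)^n A_{2n}$, hence $a(K_{2n})=A_{2n}$.

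For the general formula, I would just unfold the definition: every induced subgraph of $K_n$ on a vertex set $I$ of size $2i$ equals $K_{2i}$, so
$$
    a_i(K_n) \;=\; \sum_{\atopp{I\subseteq V(K_n)}{|I|=2i}} a(K_n|_I) \;=\; \binom{n}{2i}\,a(K_{2i}) \;=\; \binom{n}{2i}\,A_{2i}.
$$

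The whole argument is essentially a two-line manipulation once the secant-number recurrence is in hand; the only nontrivial step is recognizing (and verifying) that the combinatorial recursion produced by \eqref{rec} on the highly symmetric graph $K_n$ matches the analytic recursion coming from $\sec x\cdot\cos x=1$. No further obstacle arises, since the reduction to the numerical recurrence uses nothing beyond the observation that induced subgraphs of $K_n$ are determined by their cardinality.
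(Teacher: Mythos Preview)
Your proof is correct and follows essentially the same approach as the paper: both derive the recurrence $\sum_{j=0}^{n}\binom{2n}{2j}\,sa(K_{2j})=0$ from the definition of $sa$ (using that every induced subgraph of $K_{2n}$ is a smaller complete graph), and both match it against the identity $\sec x\cdot\cos x=1$. The paper packages this as a generating-function computation---defining $F(x)=\sum |sa(K_{2i})|\,x^{2i}/(2i)!$ and showing $F(x)\cos x=1$---while you extract the coefficient recurrence directly and invoke uniqueness; these are the same argument in different clothing, and the derivation of $a_i(K_n)=\binom{n}{2i}A_{2i}$ is identical.
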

    \begin{proof}
        We have a recurrence relation
        \begin{equation} \label{complete_graph}
            \binom{2n}0sa(K_0)+\binom{2n}2sa(K_2)+\ldots+\binom{2n}{2n}sa(K_{2n})=0
        \end{equation}
        if $n\ge 1$. Let us write $sa(K_{2i})=:X_{2i}$ and $F(x)$ be the formal series
        $$
            F(x)=\sum_{i=0}^{\infty}|X_{2i}|\frac{x^{2i}}{(2i)!}.
        $$
        Then
        $$
            F(x)\cdot \cos x= \left(\sum_{i=0}^{\infty}(-1)^i X_{2i}\frac{x^{2i}}{(2i)!}\right)
            \left(\sum_{j=0}^{\infty}(-1)^j \frac{x^{2j}}{(2j)!}\right)
        $$
        is equal to 1 using the recurrence relation above and the fact $X_0=1$. Hence $F(x)=\sec x$ and it is done.
    \end{proof}

    \begin{theorem} \label{theoremtangent}
        Let $G=K_{1,n-1}$ be the star graph with $n$ vertices for $n\ge 1$. Then
        $$
            sa(K_{1,2n-1})=(-1)^n A_{2n-1},
        $$
        where $A_{2n-1}$ is the tangent number. Moreover,
        $$
            a_i(K_{1,n-1})=\binom {n-1}{2i-1} A_{2i-1}
        $$
        for $i\ge 1$.
    \end{theorem}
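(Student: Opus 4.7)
The plan is to mirror the proof of Theorem~\ref{theoremsecant} verbatim, with $\tan x$ replacing $\sec x$: identify which induced subgraphs of $K_{1,n-1}$ have nonzero signed $a$-number, extract a linear recurrence, and convert it into a generating-function identity.

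First I would classify the induced subgraphs of $K_{1,2n-1}$ that contribute. Let $c$ denote the central vertex and $v_1,\ldots,v_{2n-1}$ the leaves. For any $I\subseteq V(K_{1,2n-1})$, either $c\in I$, in which case $G|_I\cong K_{1,|I|-1}$ is again a star and $sa(G|_I)\neq 0$ only when $|I|$ is even; or $c\notin I$, in which case $G|_I$ is edgeless, so every component is a single (odd-order) vertex and $sa(G|_I)=0$ unless $I=\varnothing$. Feeding this dichotomy into the defining recursion~\eqref{rec} yields the clean identity
$$1 + \sum_{j=1}^{n}\binom{2n-1}{2j-1}\,sa(K_{1,2j-1}) = 0, \qquad n\geq 1,$$
where the leading $1$ is the $I=\varnothing$ contribution and $\binom{2n-1}{2j-1}$ counts the induced stars on $2j$ vertices.

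Next I would read this off as a generating-function identity, following the $\sec x$ strategy of Theorem~\ref{theoremsecant}. Write $X_{2j-1}:=sa(K_{1,2j-1})$ and invoke the sign rule $sa(G)=(-1)^{|V(G)|/2}a(G)$ from the earlier remark (which follows by a straightforward induction on $|V(G)|$ from the recursion), so that $|X_{2j-1}|=(-1)^{j}X_{2j-1}$. Setting
$$F(x):=\sum_{j=1}^{\infty}|X_{2j-1}|\,\frac{x^{2j-1}}{(2j-1)!},$$
the coefficient of $x^{2n-1}$ in $F(x)\cos x$, multiplied by $(2n-1)!$, is exactly $(-1)^{n-1}$ times the left-hand side of the displayed recurrence; hence $F(x)\cos x=\sin x$, so $F(x)=\tan x$ and $a(K_{1,2n-1})=A_{2n-1}$, whence $sa(K_{1,2n-1})=(-1)^{n}A_{2n-1}$.

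Finally, for $a_i(K_{1,n-1})$ I would use the same dichotomy: an induced subgraph on $2i$ vertices either contains $c$, in which case it is isomorphic to $K_{1,2i-1}$ and contributes $A_{2i-1}$, or omits $c$, in which case it is edgeless and contributes $0$ for $i\geq 1$. There are exactly $\binom{n-1}{2i-1}$ subsets of the first kind, giving $a_i(K_{1,n-1})=\binom{n-1}{2i-1}A_{2i-1}$. The only mildly delicate step in the whole argument is the sign bookkeeping when matching $F(x)\cos x$ with $\sin x$ against the recurrence; everything else follows immediately from the clean split between induced subgraphs that contain the center and those that do not.
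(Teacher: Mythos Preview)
Your proposal is correct and follows the same route as the paper: derive the recurrence $1+\sum_{j=1}^{n}\binom{2n-1}{2j-1}\,sa(K_{1,2j-1})=0$, convert it to the identity $F(x)\cos x=\sin x$, and read off $F(x)=\tan x$; the $a_i$ formula then drops out from the same center/no-center dichotomy. If anything, your write-up is more explicit than the paper's, which simply states the recurrence and tells the reader to ``calculate $F(x)\cdot\cos x$ and check that it becomes $\sin x$.''

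One small point worth tightening: you invoke the sign rule $sa(G)=(-1)^{|V(G)|/2}a(G)$ to pass from $|X_{2j-1}|$ to $(-1)^{j}X_{2j-1}$, claiming it follows by a straightforward induction from~\eqref{rec}. That induction is not actually obvious in general (the right-hand side of~\eqref{rec} is a signed sum), and the paper itself defers the sign rule to the topological argument in Section~\ref{sec:betti}. The clean fix---which is exactly what the paper does for this theorem---is to define $F(x):=\sum_{j\ge1}(-1)^{j}X_{2j-1}\,x^{2j-1}/(2j-1)!$ from the outset, prove $F(x)=\tan x$ directly from the recurrence, and then use the nonnegativity of the tangent numbers to conclude $(-1)^{j}X_{2j-1}=A_{2j-1}=|X_{2j-1}|$. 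This avoids any appeal to the sign rule and makes the argument self-contained.
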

    \begin{proof}
        The proof is almost identical to that of Theorem \ref{theoremsecant}. In this case the recurrence relation is
        $$
            sa(\varnothing)+ \sum_{j=1}^{n} \binom{2n-1}{2j-1} sa(K_{1,2j-1}) = 0,
        $$
        if $n\ge 1$. Write $Y_{2i-1}=sa(K_{1,2i-1})$ and let $F(x)$ be the formal series
        $$
            F(x)=\sum_{i=1}^{\infty}(-1)^i Y_{2i-1} \frac{x^{2i-1}}{(2i-1)!}.
        $$
        It is enough to show that $F(x)=\tan x$. Just calculate $F(x)\cdot \cos x$ and check that it becomes $\sin x$.
    \end{proof}

    \begin{table}
    \centering
    \begin{tabular}{|c|ccccc|}
      \hline
      $a_i(K_{n})$ & $i=0$ & 1 & 2 & 3 & 4  \\ \hline
      $n=0$ & 1  & & & &   \\
      1 & 1 & & & &    \\
      $2$ & 1 & 1 &  &  &      \\
      3 & 1 & 3 &  &  &      \\
      4 & 1 & 6 & 5  &    &  \\
      5 & 1 & 10 & 25 &  &      \\
      6 & 1 & 15 & 75 & 61  &      \\
      7 & 1 & 21 & 175 & 427 &       \\
      8 & 1 & 28 & 350 & 1708 & 1385      \\
      \hline
    \end{tabular}
    \caption{Values of $a_i(K_n)$.}\label{completetable}
    \end{table}

    \begin{table}
    \centering
    \begin{tabular}{|c|ccccc|}
      \hline
      $a_i(K_{1,n-1})$ & $i=0$ & 1 & 2 & 3 & 4  \\ \hline
      $n=0$ & 1  & & & &  \\
      1 & 1 & & & &   \\
      $2$ & 1 & 1 &  &  &     \\
      3 & 1 & 2 &  &  &      \\
      4 & 1 & 3 & 2  &    &  \\
      5 & 1 & 4 & 8 &  &      \\
      6 & 1 & 5 & 20 & 16  &      \\
      7 & 1 & 6 & 40 & 96 &       \\
      8 & 1 & 7 & 70 & 336 & 272      \\
      \hline
    \end{tabular}
    \caption{Values of $a_i(K_{1,n-1})$.}\label{startable}
    \end{table}

    Table~\ref{completetable} and Table~\ref{startable} describe $i$-th $a$-numbers of $K_n$ and $K_{1,n-1}$, respectively. Especially, Table~\ref{completetable} is the unsigned version of A153641 of \cite{O}, which is nonzero coefficients of the Swiss-Knife polynomials which can be used to compute secant numbers, tangent numbers or Bernoulli numbers.

    \begin{remark}
        There is a combinatorial way to prove Theorem~\ref{theoremsecant} using \eqref{complete_graph}. See the second proof of \cite[Theorem 1.1]{St} for example. Theorem~\ref{theoremtangent} also can be proved in similar way.
    \end{remark}

    \begin{remark}
        It {can be shown} that the total $a$-numbers of our examples are given by
\begin{align*}
            b(P_{2n+1})&=(-1)^n \frac 1{n+1}\binom{2n}n,  \\
            b(C_{2n+1})&=(-1)^n \binom{2n}n, \\
            b(K_{2n+1})&=(-1)^n A_{2n+1}, \quad \text{and} \\
            b(K_{1,2n})&=(-1)^n A_{2n}.
\end{align*}
        These identities certainly can be proven using $a_i(G)$. On the other hand, they indeed can be deduced by $h$-vectors of $P_{2n+1},C_{2n+1},K_{2n+1}$, and $K_{1,2n}$, which are excellently described in \cite[Section~11]{PRW}.
    \end{remark}

\section{Preliminaries}\label{sec:pre}

    \subsection{Real toric manifolds and their rational homology: Suciu-Trevisan formula}
    Returning to topology, we need some notions from toric geometry and toric topology. A small cover, introduced by Davis-{Januszkiewicz} in \cite{DJ}, is a topological analogue of a real toric manifold. An $n$-dimensional closed smooth manifold $M$ is called a \emph{small cover over $P$} if it has a group action of $\Z_2^n$ locally isomorphic to the standard representation of $\Z_2^n$ on $\R^n$, and the orbit space $M/\Z_2^n$ can be identified with a simple polytope $P$ of dimension $n$. Let $P$ be a simple polytope with the facet set $\mathcal{F}$. Associated to a small cover over $P$, there is a homomorphism $\lambda \colon \mathcal{F} \to \Z_2^n$, where $\lambda$ specifies an isotropy subgroup for each facet. We call it a \emph{characteristic function} of the small cover.

    Suciu {and Trevisan \cite{Su1} have} established a formula to compute the rational homology of a small cover as following.
    Let $P$ be a simple polytope of dimension $n$ and $M$ a small cover over $P$ with the characteristic function $\lambda$.
    Let $\mathcal{F}=\{F_1,\ldots,F_m\}$ be the set of facets of $P$.
    Then the characteristic function $\lambda \colon \mathcal{F} \to \Z_2 ^n$ can be regarded as a $\Z_2$-matrix\ of size $n\times m$, called the \emph{characteristic matrix}.
    For each subset $S$ of $[n]=\{1,\ldots,n\}$, write $\lambda_S=\sum_{i\in S}\lambda_i$, where $\lambda_i$ is the $i$-th row of $\lambda$.
    For such $S$ we define $P_S$ be the union of facets $F_j$ such that the $j$-th entry of $\lambda_S$ is nonzero.

    \begin{theorem}\label{formula}\cite{Trev, Su1}
        Let $M$ be a small cover over a simple polytope $P$ of dimension $n$. Then the (rational) Betti number of $M$ is given by
        $$
        \beta_i (M) = \sum_{S\subseteq [n]} \rank_{\Q} \tilde H_{i-1}(P_S;\Q).
        $$
    \end{theorem}
    We remark that every Delzant polytope $P$ corresponds a real toric manifold which is also a small cover over $P$, hence Theorem~\ref{formula} is applicable. The characteristic function is determined by the primitive outward normal vector to each facet of $P$.

    \subsection{Building sets, nestohedra, {and graph associahedra}}\label{sec:buildingset}
    From now on, we talk about the motivation of defining $a$-numbers for graphs. As we mentioned in Introduction, $a$-numbers become the rational Betti numbers of real toric manifolds arising from the specific polytope associated to a simple graph. In this section, we briefly review about the graph associahedra. See \cite{P05} for details.

    \begin{definition}
    A \emph{building set} $\mathcal{B}$ on a finite set $S$ is a collection of nonempty subsets of $S$ such that
        \begin{enumerate}
            \item $\mathcal{B}$ contains all singletons $\{i\}$, $i\in S$,
            \item if $I, J \in \mathcal{B}$ and $I\cap J\neq\varnothing$, then $I\cup J\in\mathcal{B}$.
        \end{enumerate}
        If $\B$ contains the whole set $S$, then $\B$ is called \emph{connected}.
    \end{definition}

    \begin{example}
        Let $G$ {be} a finite (simple) graph with the vertex set $S$. The \emph{graphical building set} $\mathcal{B}(G)$ is defined by
        $$
        \mathcal{B}(G)=\{J\subseteq S \mid G|_J\text{ is connected}\},
        $$
    where $G|_J$ is the induced subgraph of $G$ on $J$. It is obvious that $\mathcal{B}(G)$ is a building set. A building set $\B(G)$ is connected if and only if $G$ is a connected graph.
    \end{example}

    For a building set $\B$, we can assign a simple polytope called a \emph{nestohedron}:
    \begin{definition}
        Let $\B$ be a building set on $[n+1]=\{1,\ldots,n+1\}$. For $I\subset [n+1]$, let $\Delta_I$ be the simplex given by the convex hull of points $e_i$, $i\in I$, where $e_i$ is the $i$-th coordinate vector. Then define \emph{the nestohedron} $P_\B$ as the Minkowski sum of simplices
        $$ P_\B = \sum_{I\in\B} \Delta_I. $$
        If $\B=\B(G)$ is a graphical building set, $P_{\B(G)}$ is called a \emph{graph associahedron}.
    \end{definition}

    If $\B$ is not connected, then the nestohedron $P_\B$ is simply a Cartesian product of the nestohedra corresponding to the maximal elements in $\B$. See \cite[Remark 6.7]{PRW} for details.
    Hence, in this paper, we deal with only connected building sets.

    \begin{definition}
        For a connected building set $\B$ on $[n+1]$, a subset $N\subseteq \B\setminus\{[n+1]\}$ is called a \emph{nested set} if the following holds:
        \begin{enumerate}[{(N}1)]
            \item For any $I, J\in N$ one has either $I\subseteq J$, $J \subseteq I$, or $I$ and $J$ are disjoint.\label{n1}
            \item For any collection of $k\geq 2$ disjoint subsets $J_1, \ldots, J_k \in N$, their union $J_1\cup \cdots \cup J_k$ is not in $\B$.\label{n2}
        \end{enumerate}
        The nested set complex $\Delta_\B$ is defined to be the set of all nested sets for $\B$.
    \end{definition}

    We note that $\Delta_\B$ is a simplicial complex.

    \begin{theorem}\cite[Theorem 7.4]{P05}\label{nested}
        Let $\B$ be a connected building set on $[n+1]$. Then the nestohedron $P_\B$ is a simple polytope of dimension $n$ and its dual simplicial complex is isomorphic to the nested set complex $\Delta_\B$.
    \end{theorem}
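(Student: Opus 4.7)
The plan is to approach this via the theory of generalized permutohedra, since each summand $\Delta_I$ is a (very simple) generalized permutohedron and Minkowski sums of generalized permutohedra are again of that type. The strategy breaks into three steps: establishing the dimension, describing the face lattice via nested sets, and deducing simplicity.

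For the dimension, connectedness of $\B$ means $[n+1] \in \B$, so $\Delta_{[n+1]}$ appears as one of the Minkowski summands. Since $\Delta_{[n+1]}$ has affine dimension $n$ and each $\Delta_I$ lies in the hyperplane $\sum_{j\in I} x_j = 1$ (with coordinates outside $I$ equal to $0$), the sum $P_\B$ sits in the affine hyperplane $\sum_{j=1}^{n+1} x_j = |\B|$. Connectedness ensures the affine span is exactly $n$-dimensional rather than smaller.

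For the face structure, the key identity is that the face of a Minkowski sum maximized by a linear functional $c \in \R^{n+1}$ is the Minkowski sum of the faces of the summands maximized by $c$. Since $\Delta_I$ is a simplex, its $c$-maximizing face is $\Delta_{I_c}$, where $I_c = \{i \in I : c_i = \max_{j\in I} c_j\}$. Varying $c$, one associates to each face $F$ of $P_\B$ a canonical subset $N_F \subseteq \B \setminus \{[n+1]\}$ by recording the nontrivial maximizing subblocks. The claims to verify are then: (i) $N_F$ satisfies the two nested-set axioms (N1) and (N2); and (ii) the assignment $F \mapsto N_F$ is an inclusion-reversing bijection between proper faces of $P_\B$ and faces of $\Delta_\B$.

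For simplicity, one counts dimensions: a vertex of $P_\B$ corresponds to a maximal nested set, which one shows has exactly $n$ elements, so precisely $n$ facets meet at each vertex. Equivalently, every maximal face of $\Delta_\B$ has $n$ elements, hence $\Delta_\B$ is pure of dimension $n-1$ and $P_\B$ is simple. The main obstacle is establishing item (i): one must check that the combinatorics of how $c$ partitions each block $I \in \B$ interacts correctly with the closure-under-intersecting-unions axiom of a building set to yield both nested-set conditions, and in particular that the condition (N2) is forced rather than being an extra constraint. I would proceed by induction on $|\B|$, exploiting the reduction that the face of $P_\B$ where the coordinate functional $x_i$ is maximized decomposes as a product $P_{\B'} \times P_{\B''}$ for a smaller connected building set $\B'$ on the "star" of $i$ together with a residual building set $\B''$ on the complement, so that the inductive hypothesis identifies the face lattice and transports the nested-set description.
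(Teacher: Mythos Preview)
The paper does not prove this theorem at all: it is quoted verbatim as \cite[Theorem~7.4]{P05} and used as a black box, with no accompanying proof environment. So there is no ``paper's own proof'' to compare against.

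That said, your outline is broadly in the spirit of Postnikov's original argument in \cite{P05}: analyze the normal fan of $P_\B$ via the faces of each simplex summand $\Delta_I$ picked out by a linear functional, and identify the resulting combinatorics with the nested set complex. A few points are loose. First, the phrase ``recording the nontrivial maximizing subblocks'' does not by itself define $N_F$; Postnikov's actual assignment goes through the partition of $[n+1]$ into level sets of $c$ and then extracts the $\B$-maximal elements supported on unions of top blocks, which is what forces (N1) and (N2). Second, your inductive product decomposition $P_{\B'}\times P_{\B''}$ via maximizing a single coordinate $x_i$ is not quite the right reduction: the face where $x_i$ is maximal need not split as a product of nestohedra in the way you suggest, since the ``star of $i$'' is not a well-defined sub-building-set without saying which element of $\B$ you contract along. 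The clean statement (see \cite[Theorem~7.4]{P05} or \cite[Proposition~7.5]{PRW}) is that each facet $F_I$ is itself a product of nestohedra $P_{\B|_I}\times P_{\B/I}$ for the restriction and contraction building sets, and this is what drives the induction. Your dimension and simplicity arguments are fine once the face bijection is in place.
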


    Let $\B$ be a building set on $[n+1]$, and $\Delta^n$ be an $n$-simplex. Let $G_1, \ldots, G_{n+1}$ be the facets of $\Delta^n$. Then, each face of $\Delta^n$ can be uniquely expressed by $G_I = \cap_{i\in I} G_i$ for some $I\subset[n+1]$.
The nestohedron $P_\B$ can be thought as the simplex $\Delta^n$ whose faces $G_I$ ($I\in\B$) are ``cut off'' in the following way:

    \begin{proposition}\cite[Theorem 6.1]{Z} \label{prop:obtain_P_B}
    Let $\B$ {be} a building set on $[n+1]$. Let $\varepsilon$ be a sequence of positive numbers $\varepsilon_1\ll\varepsilon_2\ll \ldots \ll \varepsilon_n\ll \varepsilon_{n+1}$. For each $I\in\B\setminus \{[n+1]\}$, assign a half-space
    $$
    A_I=\left\{(x_1,\ldots,x_{n+1})\in \R^{n+1}\middle| \sum_{i\in I}x_i \geq \varepsilon_{|I|}\right\}
    $$
    and for $I=[n+1]$, define $A_{[n+1]}$ be the hyperplane $x_1+\ldots +x_{n+1} = \varepsilon_{n+1}$. Let $P_\varepsilon$ be the intersection $\bigcap_{I\in\B} A_I$. Then one can choose $\varepsilon$ so that $P_\varepsilon=P_\B$ whose facets are given by $F_I=\partial A_I\cap P_\varepsilon$ for each $I\in\B\setminus\{[n+1]\}$. Furthermore, $P_\B$ is a Delzant polytope, i.e., the outward normal vector $\lambda(F)$ of each facet $F$ forms a basis at each vertex of $P_\B$.
    \end{proposition}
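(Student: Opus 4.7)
The plan is to prove the proposition through the support function formalism. For a polytope $P$ in $\R^{n+1}$, its support function is $h_P(u)=\max_{x\in P}\langle u,x\rangle$, and for a Minkowski sum one has $h_{Q_1+Q_2}=h_{Q_1}+h_{Q_2}$. Since $h_{\Delta_I}(u)=\max_{i\in I}u_i$, this immediately gives
\[
h_{P_\B}(u)=\sum_{I\in\B}\max_{i\in I}u_i,
\]
which pins down $P_\B$ uniquely and is the starting point of the argument.

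First, I would specialize to directions of the form $u=-\sum_{j\in I}e_j$ for $I\in\B\setminus\{[n+1]\}$, since these are the conjectured outward facet normals modulo the affine hull $A_{[n+1]}$. A direct computation shows that $h_{\Delta_J}(-\sum_{j\in I}e_j)=0$ when $J\not\subseteq I$ and $-1$ when $J\subseteq I$, so the supporting halfspace for $P_\B$ in that direction has the form $\{\sum_{i\in I}x_i\geq c_I\}$, where $c_I$ is determined by the sum over those $J\in\B$ with $J\subseteq I$. I would then package these constants into the values $\varepsilon_{|I|}$ of the proposition, rescaling the simplices $\Delta_I$ in the Minkowski sum (for instance, replacing $\Delta_I$ by $\lambda_{|I|}\Delta_I$) so that the resulting constants depend only on $|I|$ and satisfy the stated ordering $\varepsilon_1\ll\varepsilon_2\ll\cdots\ll\varepsilon_{n+1}$.

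Second, I would verify that no halfspaces beyond $\{A_I\}_{I\in\B}$ and the affine constraint $A_{[n+1]}$ are needed, equivalently, that every facet of $P_\B$ arises as some $F_I=\partial A_I\cap P_\varepsilon$ with $I\in\B\setminus\{[n+1]\}$. This is a direct consequence of Theorem~\ref{nested} together with the fact that vertices of the nestohedron correspond to maximal nested sets, which forces the number of facets to match $|\B\setminus\{[n+1]\}|$. The Delzant property then reduces to showing that at each vertex $v$ of $P_\B$, associated with a maximal nested set $\{I_1,\ldots,I_n\}$, the outward normals $-\sum_{i\in I_k}e_i$ form an integral basis inside the lattice of the hyperplane $A_{[n+1]}$; this is a short linear algebra computation exploiting the laminar structure of the $I_k$'s, where each normal can be triangularized by picking a unique minimal element of $I_k$ not contained in any strictly smaller $I_j$.

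The main obstacle is controlling how the ordering $\varepsilon_1\ll\varepsilon_2\ll\cdots\ll\varepsilon_{n+1}$ translates into the combinatorial type of $P_\varepsilon$. Geometrically, smaller subsets $I$ correspond to faces $G_I$ of lower dimension in $\Delta^n$ and must be truncated first by very small $\varepsilon_{|I|}$ so that subsequent truncations for larger $I\in\B$ act on fresh regions without interfering. Making this precise requires showing that every maximal nested set gives a genuine vertex of $P_\varepsilon$ and, conversely, that every vertex of $P_\varepsilon$ arises this way. This is exactly where the building set axioms---in particular that $I\cap J\neq\varnothing$ forces $I\cup J\in\B$---are needed, to preclude the appearance of spurious vertices when distinct truncations meet, and to ensure the halfspaces $A_{I}$ and $A_J$ are automatically supplemented by $A_{I\cup J}$.
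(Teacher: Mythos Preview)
The paper does not supply its own proof of this proposition: it is quoted verbatim as \cite[Theorem~6.1]{Z} and used as a black box. There is therefore nothing in the paper to compare your argument against; the authors simply invoke Zelevinsky's result and move on to computing the normal vectors $\lambda(F_I)$ afterward.

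That said, your outline contains one genuine slip worth flagging. After computing the support function of $P_\B=\sum_{I\in\B}\Delta_I$ in the direction $-\sum_{j\in I}e_j$, you obtain the supporting halfspace $\sum_{i\in I}x_i\ge c_I$ with $c_I=|\{J\in\B:J\subseteq I\}|$. You then propose to ``package these constants into the values $\varepsilon_{|I|}$'' by rescaling the summands $\Delta_I\mapsto\lambda_{|I|}\Delta_I$. This cannot work as stated: the number of building-set elements contained in $I$ is not a function of $|I|$ alone (two sets of the same size can sit over very different pieces of $\B$), and no rescaling that depends only on $|I|$ will force all the $c_I$ with $|I|=k$ to coincide. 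The correct statement is only that $P_\varepsilon$ and $P_\B$ are \emph{combinatorially equivalent} (indeed normally equivalent, hence yield the same toric variety), not literally equal; Zelevinsky's argument proceeds by an iterated truncation of the simplex rather than by matching support values with the Minkowski sum. Your treatment of the Delzant condition via the laminar structure of a maximal nested set is, by contrast, the standard and correct approach.
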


    This viewpoint would help one see the nestohedron visually and intuitively than the Minkowski sum method does. The important point is that $P_\B$ is Delzant and hence we obtain the \emph{associated toric manifold} $M_\C(\B)=M_\C(P_\B)$ and the \emph{associated real toric manifold} $M_\R(\B)$, which is the real locus of $M_\C(\B)$. From now on, our focus will be on $M_\R(\B)$.
In graphical case the notation $M(G):=M_\R(\B(G))$ will be also used.
\begin{example}
  Let $\B = \{\{1\}, \{2\},\{3\}, \{1,2\}, \{2,3\}, \{1,2,3\}\}$ (simply, $\B=\{1,2,3,12,23,123\}$). Since each element of $\B$ other than $123$ indicates a facet, $P_\B$ is a pentagon. Explicit geometric information obtained as in Proposition~\ref{prop:obtain_P_B} is illustrated in Figure~\ref{fig:nesto}.
  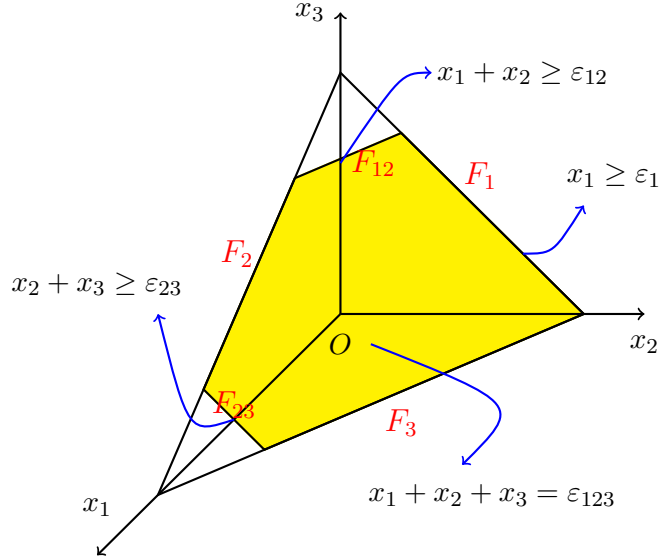
\begin{figure}[h]
    \begin{center}
 \begin{tikzpicture}[thick, scale=0.8]
    \filldraw[color=yellow](1,3)--(4,0)--(-1.25,-2.25)--(-2.25,-1.25)--(-0.75,2.25)--cycle;
    \draw (1,3)--(4,0)--(-1.25,-2.25)--(-2.25,-1.25)--(-0.75,2.25)--cycle;
    \draw (-3,-3)--(4,0)--(0,4)--cycle;
    \draw (0,-0.5) node{$O$} (5,-0.5) node{$x_2$}  (-0.5,5) node{$x_3$} (-4, -3.2) node{$x_1$} ;
    \draw[->] (0,0)--(5,0);
    \draw[->] (0,0)--(0,5);
    \draw[->] (0,0)--(-4,-4);
    \draw (2.3,2.3)node{\red{\large{$F_1$}}}
          (0, 2.5) node[right] {\red{\large{$F_{12}$}}}
          (-1.7, 1) node {\red{\large{$F_2$}}}
          (-1.75, -1.5) node {\red{\large{$F_{23}$}}}
          (1, -1.8) node{\red{\large{$F_3$}}};
    \draw[->, color=blue] (3,1)..controls (3.5, 1)..(4, 1.8);
    \draw[->, color=blue] (0.5,-0.5)..controls (3, -1.5)..(2, -2.5);
    \draw[->, color=blue] (-1.75,-1.75)..controls (-2.5, -2)..(-3, 0);
    \draw[->, color=blue] (0, 2.5)..controls (1, 4)..(1.5, 4);
    \draw (4.5, 2.3)node {$x_1 \geq \varepsilon_1$}
          (2.5, -3) node {$x_1 + x_2 + x_3 = \varepsilon_{123}$}
          (-4, 0.5) node {$x_2 + x_3 \geq \varepsilon_{23}$}
          (3, 4)node{$x_1 +x_2 \geq \varepsilon_{12}$};
 \end{tikzpicture}
    \end{center}
    \caption{An example of a (geometric) nestohedron.}\label{fig:nesto}
  \end{figure}
\end{example}

\begin{example} \label{example:associahedron}
   Let $G$ be a path graph $P_4$ with $4$ vertices.
    Then, $\B(G)=\{1,2,3,4,12,23,34,123,234,1234\}$, and $P_{\B(G)}$ can be obtained as in Figure~\ref{fig:nestocut}.
   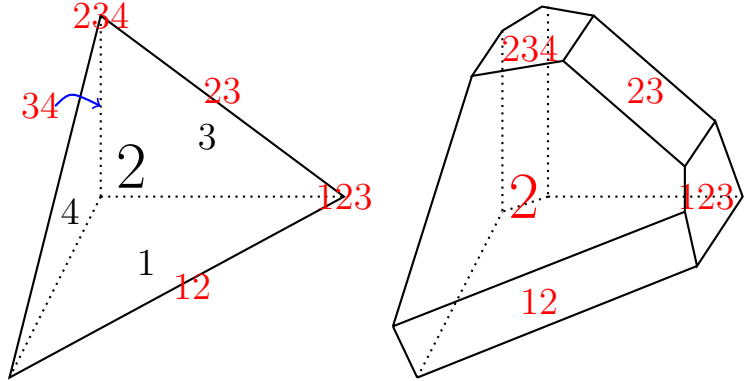
\begin{figure}[h]
        \begin{center}
\begin{tikzpicture}[thick, scale=0.4]
    \draw (8,0)--(0,6)--(-3,-6)--cycle;
    \draw[dotted](0,0)--(8,0) (0,0)--(0,6) (0,0)--(-3,-6);
    \draw (1,1) node {\Huge{$2$}}
          (1.5,-2.2) node {\Large $1$}
          (3.5,2) node {\Large $3$}
          (-1,-.5) node {\Large $4$}
          (4,3.5) node {\Large{\red{$23$}}}
          (3,-3) node {\Large{\red{$12$}}};
    \draw[color=blue, ->] (-1.5,3)..controls (-1, 3.5)..(0,3);
    \draw (-2,3) node {\Large{\red{$34$}}}
          (0,6) node {\Large{\red{$234$}}}
          (8,0) node {\Large{\red{$123$}}};
\end{tikzpicture}
\begin{tikzpicture}[thick, scale=0.4]
    \draw (0,0) node {\Huge{\red{$2$}}}
          (4,3.5) node {\Large{\red{$23$}}}
          (0.5,-3.5) node {\Large{\red{$12$}}}
          (0.2,4.9) node {\Large{\red{$234$}}}
          (6,0) node {\Large{\red{$123$}}};
    \draw (5.3,1)--(5.3,-0.5)--(5.7,-2.3)--(7.2,0)--(6.3,2.5)--cycle;
    \draw (1.3,4.5)--(2.3,6)--(0.6,6.3)--(-0.7,5.5)--(-1.7,4)--cycle;
    \draw (6.3,2.5)--(2.3,6)
          (5.3,1)--(1.3,4.5)
          (5.3,-0.5)--(-4.3,-4.3)
          (5.7,-2.3)--(-3.5,-6)
          (-1.7,4)--(-4.3,-4.3)
          (-3.5,-6)--(-4.3,-4.3);
    \draw[dotted] (-3.5,-6)--(-0.7,-0.5)
          (0.8,0)--(-0.7,-0.5)
          (0.8,6.3)--(0.8,0)
          (-0.7,5.5)--(-0.7,-0.5)
          (0.8,0)--(7.2,0);
\end{tikzpicture}
    \end{center}
    \caption{A 3-simplex and a graph associahedron, before and after ``cutting''}\label{fig:nestocut}
    \end{figure}
\end{example}

\section{{The simplicial complex $K_G^{\mathrm{even}}$} }\label{sec:kgeven}

    Let $\B$ be a building set on $[n+1]$ and $P_\B$ be the corresponding nestohedron. Let us see how to compute the outward normal vectors of the Delzant polytope $P_\B = P_\varepsilon$, where $P_\varepsilon$ is the polytope in Proposition~\ref{prop:obtain_P_B}. Let $\mathcal{F}$ be the set of facets of $P_\B$. By Theorem \ref{nested},  $\mathcal{F}$ is indexed by $\B\setminus \{[n+1]\}$ and any facet of $P_\B=P_\varepsilon$ is of the form $F_I = \partial A_I\cap P_\varepsilon$ for some $I\in\B\setminus \{[n+1]\}$. Denote the (integral and primitive) outward normal vector to $F_I$ by $\lambda(F_I)$. Note that $P_\varepsilon$ is embedded in the hyperplane $A_{[n+1]}\subseteq \R^{n+1}$. Let $\pi :\R^{n+1} \to \R^n$ be the projection on the first $n$ coordinates, i.e., $\pi(x_1,\ldots,x_n,x_{n+1})=(x_1,\ldots, x_n)$. The map $\pi$ sends $A_{[n+1]}$ onto $\R^n$, assigning it a coordinate. In that coordinate, one checks that the outward normal vector is given by
    \begin{equation}\label{eqn:lambdaofnesto}
    \lambda(F_I)=\sum_{i\in I} v_i,
    \end{equation}
    where $v_i=-e_i$, $1\leq i \leq n$, and $v_{n+1}=e_1+\ldots+e_n$ and $e_i$ is the $i$-th coordinate vector of $\R^n$.

    As a small cover, the characteristic function of $M_\R(\B)$ is given by $\lambda$ modulo $2$, where $\lambda$ is given by \eqref{eqn:lambdaofnesto}. We abuse the notation $\lambda$ for the modulo $2$ reduction. The characteristic matrix for $\lambda$, again written as $\lambda=(\lambda_{iI})$, is an $n\times (|\B|-1)$ matrix as a $\Z_2$-matrix. By \eqref{eqn:lambdaofnesto}, the entry $\lambda_{iI}$ can be computed as
    $$
    \lambda_{iI}=\left\{
        \begin{array}{lll}
          1,&i\in I, &n+1\notin I,\\
          0,&i\notin I, &n+1\notin I,\\
          0,&i\in I, &n+1\in I,\\
          1,&i\notin I, &n+1\in I.
        \end{array}\right.
    $$
    Consider a $\Z_2$-matrix $\lambda'$ of size $(n+1)\times (|\B|-1)$ which is defined by
    $$
    \lambda'_{iI}=\left\{
        \begin{array}{ll}
          1, & i\in I,\\
          0, & \textrm{otherwise}.
        \end{array}\right.
    $$
    It is trivial that the $i$-th row of $\lambda$ is the sum of the $i$-th and the $(n+1)$-th rows of $\lambda'$. In general, $\lambda_S$ is the sum of the $j$-th rows of $\lambda'$ over all $j\in T$, where $T=T(S)\subseteq [n+1]$ is
    $$
    T=\left\{
        \begin{array}{ll}
          S, & \textrm{if } |S| \textrm{ is even},\\
          S\cup\{n+1\}, &\textrm{if } |S| \textrm{ is odd}.
        \end{array}\right.
    $$
    The map $T$ is a one-to-one correspondence from the set of subsets of $[n]$ to the set of subsets of $[n+1]$ with even cardinality. Therefore, we have a new formula for our case in place of Theorem \ref{formula}:
    \begin{lemma}\label{newformula}
        Let $\B$ be a building set on $[n+1]$ and $M_\R(\B)$ its associated real toric manifold. Then the Betti number of $M_\R(\B)$ is given by
        $$\beta_i (M_\R(\B)) = \sum_{\atopp{T\subseteq [n+1]}{|T|=\textrm{even}}} \rank_\Q \tilde H_{i-1}(P'_T;\Q),
        $$
        where $P'_T$ is the union of every facet $F_I$ such that $|T\cap I|$ is odd.
    \end{lemma}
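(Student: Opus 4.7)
The plan is to apply the Suciu--Trevisan formula (Theorem \ref{formula}) to $M = M_\R(\B)$ and then translate the index of summation from subsets $S \subseteq [n]$ to even-cardinality subsets $T \subseteq [n+1]$ via the bijection $T(\cdot)$ described just above the statement. Because $P_\B$ is Delzant (Proposition \ref{prop:obtain_P_B}), $M_\R(\B)$ is a small cover over $P_\B$ with characteristic function equal to the mod $2$ reduction of the outward normal vectors $\lambda(F_I) = \sum_{i \in I} v_i$. Applying Theorem \ref{formula} immediately gives
\[
\beta_i(M_\R(\B)) = \sum_{S \subseteq [n]} \rank_{\Q} \tilde H_{i-1}(P_S; \Q),
\]
so everything reduces to identifying $P_S$ with $P'_{T(S)}$.

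Next, I would use the identity $\lambda_i = \lambda'_i + \lambda'_{n+1}$ for each $i \in [n]$, which was already verified in the discussion preceding the lemma, and extend it by $\Z_2$-linearity. Summing over $i \in S$ gives $\lambda_S = \sum_{i\in S}\lambda'_i + |S|\,\lambda'_{n+1}$, and since we are working mod $2$ this equals $\sum_{j \in T(S)} \lambda'_j$, where the $(n+1)$-st row is absorbed exactly when $|S|$ is odd, matching the definition of $T(S)$. Consequently the $I$-th entry of $\lambda_S$ is
\[
\sum_{j \in T(S)} \lambda'_{jI} \;=\; \bigl|T(S) \cap I\bigr| \pmod 2,
\]
so a facet $F_I$ lies in $P_S$ if and only if $|T(S)\cap I|$ is odd. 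By the definition of $P'_T$ this means $P_S = P'_{T(S)}$.

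Finally, since the map $S \mapsto T(S)$ is a bijection from the power set of $[n]$ onto the collection of even-cardinality subsets of $[n+1]$, I can reindex the sum and rewrite the Suciu--Trevisan formula as
\[
\beta_i(M_\R(\B)) = \sum_{\substack{T \subseteq [n+1] \\ |T| \text{ even}}} \rank_{\Q} \tilde H_{i-1}(P'_T; \Q),
\]
which is the desired identity. There is no serious obstacle: the lemma is essentially a relabelling of Theorem \ref{formula}. The only point requiring care is the parity case distinction used to combine the row-sum identity $\lambda_i = \lambda'_i + \lambda'_{n+1}$ with the cancellation $2\lambda'_{n+1} \equiv 0 \pmod 2$, which forces the $(n+1)$-st row to appear precisely when $|S|$ is odd and thus explains why $T$ lands in the even-cardinality subsets.
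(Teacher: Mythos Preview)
Your proposal is correct and follows essentially the same argument as the paper: the lemma is derived from Theorem~\ref{formula} by the row-sum identity $\lambda_S = \sum_{j\in T(S)}\lambda'_j$ established just before the statement, together with the observation that $S\mapsto T(S)$ bijects subsets of $[n]$ with even-cardinality subsets of $[n+1]$. You have simply made explicit the computation of the $I$-th entry as $|T(S)\cap I|\pmod 2$, which the paper leaves implicit.
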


    \begin{example}
        Let $\B = \{1,2,3,12,23,123\}$. Then $\lambda$ is a $2\times 5$ matrix
        $$ \lambda =
            \begin{pmatrix}
                1 & 2 & 3 & 12 & 23 \\
                \hline
                1 & 0 & 1 & 1 & 1 \\
                0 & 1 & 1 & 1 & 0
            \end{pmatrix}.
        $$
        The zeroth row above the horizontal line was inserted only to indicate indexing of facets. For example, if $S=\{2\}$, then $P_S = F_2 \cup F_3 \cup F_{12}$. For $S=\{1,2\}$, the sum of the first and the second row is $(1 1 0 0 1)$ and therefore $P_S = F_1 \cup F_2 \cup F_{23}$.

        Next, $\lambda'$ is a $3 \times 5$ matrix
        $$ \lambda' =
            \begin{pmatrix}
                1 & 2 & 3 & 12 & 23 \\
                \hline
                1 & 0 & 0 & 1 & 0 \\
                0 & 1 & 0 & 1 & 1 \\
                0 & 0 & 1 & 0 & 1
            \end{pmatrix}.
        $$
        Note that $T$ always has even cardinality. If $S=\{2\}$, then $T=\{2,3\}$, and $P'_T=F_2 \cup F_3 \cup F_{12}$. In the case $S=\{1,2\}$, then $T=\{1,2\}$, and $P'_T=F_1 \cup F_2 \cup F_{23}$. Observe that $P_S=P'_T$.
    \end{example}

    Note that $P'_T\subseteq \partial P$ has its dual simplicial complex which is denoted by $K'_T$. Obviously $K'_T$ is an induced subcomplex of $\Delta_\B$. If there is no danger of confusion, we will abuse the notation $I$ for a facet $F_I$, its index set $I\in\B\setminus[n+1]$ or the corresponding vertex of $K'_T$.

From now on, unless otherwise noted, the building set $\B$ will be graphical with a connected graph $G$.

    \begin{definition}
    Let $\B=\B(G)$, where $G$ is a connected graph. We say that facets $\{F_{I_1},\ldots,F_{I_k}\}$ of $P_\B$ \emph{meet by inclusion} if there is a reindexing such that $I_1\subset\ldots\subset I_k$. We also say that facets $F_I$ and $F_J$ \emph{meet by separation} if $G|_{I\cup J}$ is a disconnected graph. In both cases we say $F_I$ and $F_J$ meet.
    \end{definition}

    We note that $\Delta_\B$ is a nested set complex by Theorem~\ref{nested}. Thus, if $F_I \cap F_J \neq \varnothing$, then $F_I$ and $F_j$ meet either inclusion or separation. Otherwise, that is, $F_I \cap F_J = \varnothing$, then
 $G|_{I\cup J}$ is connected and neither $I\subseteq J$ nor $J \subseteq I$. In this case we say $F_I$ and $F_J$ does not meet. In this paper, we will use the term `meet' in only this sense to avoid confusion. Thus, for example, if the distinct facets $I$ and $J$ meet, then $I\subset J$, $I\supset J$, or $I\cup J\notin \B$. If $I$ and $J$ does not meet, then $G|_{I\cup J}$ is connected.

    Before proceeding to general $T$, we first consider the situation when $G$ is a connected graph with $n+1=2k$ vertices and $T=[n+1]$ is the entire set. Assume that $n+1\geq 4$ to prevent trivial cases. Denote $P'_T$ by $P_G^{\mathrm{odd}}$  and denote $K'_T$ by $K_G^{\mathrm{odd}}$. Notice that $P_G^{\mathrm{odd}}$ is the union of every facet $F_I$ such that $|I|$ is odd. Similarly define $P_G^{\mathrm{even}}$ be the union of every facet $F_I$ such that $|I|$ is even. Its dual complex $K_G^{\mathrm{even}}$ is the induced subcomplex of $\Delta_{\B(G)}$ whose vertices have even cardinality. Their union $P_G^{\mathrm{odd}}\cup P_G^{\mathrm{even}} = \partial P_{\B(G)}$ is homeomorphic to the sphere $S^{n-1}$. Note that we are enough to compute $H_*(P_G^{\mathrm{even}},\Q)$ instead of $H_*(P_G^{\mathrm{odd}},\Q)$ by Alexander duality.

    \begin{lemma}\cite[Corollary 7.2]{PRW} \label{lem:flag}
        For a connected finite graph $G$, the simplicial complex $\Delta_{\B(G)}$ is a flag complex, i.e. $\Delta_{\B(G)}$ contains every clique of its 1-skeleton. Therefore, if $G$ has an even number of vertices, the simplicial complex $K_G^{\mathrm{even}}$ is also flag.
    \end{lemma}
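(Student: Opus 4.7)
The first assertion is cited directly from [PRW, Corollary 7.2], so the task reduces to deriving flagness of $K_G^{even}$ from it. For orientation, the flagness of $\Delta_{\B(G)}$ itself boils down to showing that for a \emph{graphical} building set the condition (N2) reduces to pairs: if pairwise disjoint $J_1,\ldots,J_m \in \B(G)$ satisfied $J_1\cup\cdots\cup J_m \in \B(G)$, then $G|_{J_1\cup\cdots\cup J_m}$ would be connected, so contracting each $J_i$ to a single vertex would produce a connected graph on $m$ nodes and hence at least one $G$-edge between some $J_a$ and $J_b$; but then $G|_{J_a\cup J_b}$ would be connected, making $J_a\cup J_b\in\B(G)$ and contradicting pairwise~(N2). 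Together with the trivially pairwise condition (N1), this is what makes any clique in the $1$-skeleton of $\Delta_{\B(G)}$ into a nested set.

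For the ``therefore'' clause, recall that $K_G^{even}$ was introduced just above as the induced subcomplex of $\Delta_{\B(G)}$ on the vertex set $\{I\in\B(G)\setminus\{V(G)\}\,:\,|I|\text{ even}\}$. So it suffices to invoke the standard fact that an induced subcomplex of a flag complex is again flag. Explicitly: a clique $\{I_1,\ldots,I_k\}$ in the $1$-skeleton of $K_G^{even}$ has each pair $\{I_a,I_b\}$ an edge of $K_G^{even}$, and since $K_G^{even}$ is induced this pair is also an edge of $\Delta_{\B(G)}$; the whole set is therefore a clique in the $1$-skeleton of $\Delta_{\B(G)}$, which by the cited flagness spans a simplex $\sigma\in\Delta_{\B(G)}$; as every vertex of $\sigma$ has even cardinality, $\sigma$ already lies in $K_G^{even}$.

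The main thing to verify along the way is that $K_G^{even}$ really is an \emph{induced} (i.e.\ full) subcomplex rather than some other subcomplex with the same $1$-skeleton; this is built into the definition, so the flagness transfer is purely formal and there is no substantive obstacle.
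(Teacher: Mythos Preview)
Your proposal is correct and follows essentially the same approach as the paper. The paper likewise proves the flagness of $\Delta_{\B(G)}$ directly (despite the citation) by arguing that pairwise separation of disjoint $J_i$'s forces $G|_{J_1\cup\cdots\cup J_\ell}$ to be disconnected---your contraction formulation is the contrapositive of this---and then concludes with the one-line observation that an induced subcomplex of a flag complex is flag, exactly as you do.
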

    \begin{proof}
        Let $C=\{J_1,\ldots,J_\ell\}$ be a clique, i.e. any two of $F_{J_i}$'s meet by inclusion or meet by separation. Then $C$ satisfies (N\ref{n1}). To check (N\ref{n2}), we can assume that $J_i$'s are mutually disjoint. Any two of $F_{J_i}$'s meet by separation and that means any $G|_{J_i}$ cannot have outgoing edges in $G|_{J_1\cup\cdots\cup J_\ell}$. Thus $G|_{J_1\cup\cdots\cup J_\ell} = G|_{J_1} \cup \cdots \cup G|_{J_\ell}$ and it is disconnected.

       Since an induced subcomplex of a flag complex is flag, $K_G^{\mathrm{even}}$ is flag.
    \end{proof}

   \begin{definition}[\cite{PRW}]
        A simplicial complex $\Delta'$ is a \emph{geometric subdivision} of a simplicial complex $\Delta$ if they have geometric realizations that are topological spaces on the same underlying set, and every face of $\Delta'$ is contained in a single face of $\Delta$.
    \end{definition}
    \begin{lemma}\label{lem:subdivision1}
        Let $\B$ be a connected building set on $[n+1]$. Let $L$ be the order complex of the poset of nonempty proper subsets of $[n+1]$. Then $L$ is a geometric subdivision of the nested set complex $\Delta_\B$, where the face of $L$ corresponding to the chain $\varnothing \subsetneq I_1 \subsetneq \dotsb \subsetneq I_s \subsetneq [n+1]$ maps into the face of $\Delta_\B$ corresponding to the nested set consisting of all maximal elements of $\B|_{I_j}$ as $j$ runs over $1,\dotsc,s$.
    \end{lemma}
    \begin{proof}
        Proposition~3.2 of \cite{PRW} implies that every nested set complex $\Delta_\B$ can be geometrically subdivided to $\Delta_{\B(K_{n+1})}$, where $K_{n+1}$ is a complete graph. But $L$ is exactly the nested set complex $\Delta_{\B(K_{n+1})}$.
    \end{proof}

    \begin{lemma}\label{subd}
        Assume $G$ has even order. Let $\widehat{S_G}$ be the set of subsets of $V(G)$ such that for each element $I$ of $\widehat{S_G}$, the induced subgraph $G|_I$ has no connected components of odd order. Let $S_G = \widehat{S_G}\setminus\{\varnothing,V(G)\}$. Let $L_G^{\mathrm{even}}$ be the order complex of the poset $S_G$, that is, $L_G^{\mathrm{even}}$ is the simplicial complex whose faces are finite chains of $S_G$. Then $L_G^{\mathrm{even}}$ is a geometric subdivision of $K_G^{\mathrm{even}}$.
    \end{lemma}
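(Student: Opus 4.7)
I would construct a piecewise-linear map $\phi \colon |L_G^{even}| \to |K_G^{even}|$ that sends every simplex of $L_G^{even}$ linearly into a simplex of $K_G^{even}$, and show that it is a homeomorphism; this is exactly what it means for $L_G^{even}$ to be a subdivision of $K_G^{even}$. For a vertex $I$ of $L_G^{even}$, let $\alpha(I) = \{C_1, \ldots, C_m\}$ be the set of connected components of $G|_I$. Each $C_j$ is a connected even-order proper subset of $V(G)$, hence a vertex of $K_G^{even}$, and I would first check that $\alpha(I)$ is a face of $K_G^{even}$: condition (N1) is automatic since the $C_j$ are pairwise disjoint, and (N2) holds because any union of $\geq 2$ components of $G|_I$ is disconnected in $G$ (any edge connecting two of them would already sit inside $G|_I$). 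Set $\phi(I)$ equal to the barycenter of $\alpha(I)$. For any simplex $\tau = \{I_1 \subsetneq \cdots \subsetneq I_k\}$ of $L_G^{even}$, the same nested-set check shows $\sigma_\tau := \alpha(I_1) \cup \cdots \cup \alpha(I_k)$ is again a face of $K_G^{even}$, so $\phi$ extends linearly on $\tau$ to a well-defined PL map.

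\textbf{Combinatorial description on each simplex $\sigma$ of $K_G^{even}$.}
Fix $\sigma = \{A_1, \ldots, A_r\} \in K_G^{even}$ and view its vertex set as a poset under inclusion, which is a forest by (N1). Let $L_\sigma \subseteq L_G^{even}$ denote the subcomplex of simplices $\tau$ with $\sigma_\tau \subseteq \sigma$. The key combinatorial claim is that $I \mapsto \alpha(I)$ is a bijection between the vertices of $L_\sigma$ and the nonempty antichains of the forest $\sigma$, with inverse $\alpha \mapsto \bigcup \alpha$: condition (N2) inside $\sigma$ ensures that the components of $G|_{\bigcup \alpha}$ are exactly the elements of $\alpha$, so $\bigcup \alpha \in S_G$. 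Moreover, a chain $I_1 \subsetneq \cdots \subsetneq I_k$ in $S_G$ is a simplex of $L_\sigma$ if and only if the corresponding antichains form a chain $\alpha(I_1) < \cdots < \alpha(I_k)$ in the antichain order defined by $\alpha \le \beta$ iff $\downarrow \alpha \subseteq \downarrow \beta$; the forward direction uses that each $A \in \alpha(I_j)$, being connected in $G$, is contained in a single component of $G|_{I_{j+1}}$. Hence $L_\sigma$ is the order complex of the lattice of nonempty order ideals of the forest $\sigma$, with its vertices placed at the barycenters of the corresponding faces of $\sigma$.

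\textbf{Triangulation of $|\sigma|$ --- the main obstacle.}
The crux is showing $\phi$ restricts to a homeomorphism $|L_\sigma| \to |\sigma|$. I would give an explicit inverse. For an interior point $q = \sum_A t_A e_A$ of $|\sigma|$ with $\sum_A t_A = 1$, define the order-reversing function $u_A := \sum_{B \in \sigma,\, B \supseteq A} t_B$ on the forest $\sigma$, let $0 < v_1 < v_2 < \cdots < v_\ell$ be its distinct values, and set $F_i := \{A \in \sigma : u_A \ge v_i\}$ and $\alpha_i := \max(F_i)$. Then $F_1 \supsetneq \cdots \supsetneq F_\ell$ is a strictly decreasing chain of order ideals of $\sigma$, so $\alpha_\ell < \cdots < \alpha_1$ is a simplex of $L_\sigma$. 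The central telescoping step uses that in the inclusion forest each non-maximal $A \in \sigma$ has a unique parent $A^{++}$ (the ancestors of $A$ in $\sigma$ form a chain by (N1)) with $u_A - u_{A^{++}} = t_A$, together with $u_A = t_A$ for maximal $A$; this yields $\sum_{i:\, A \in \alpha_i}(v_i - v_{i-1}) = t_A$ (with $v_0 := 0$). Summing over $A$ gives $\sum_i |\alpha_i|(v_i - v_{i-1}) = 1$, and equating coefficients of $e_A$ gives $q = \sum_i |\alpha_i|(v_i - v_{i-1}) \cdot \mathrm{bary}(\alpha_i)$. Uniqueness of the $u$-filtration yields injectivity, and the explicit formula surjectivity onto $\mathrm{int}|\sigma|$; boundary points reduce to proper subfaces of $\sigma$ by continuity.

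\textbf{Gluing.}
Since the description of $L_\sigma$ depends only on the inclusion forest of $\sigma$, the local PL homeomorphisms $|L_\sigma| \to |\sigma|$ restrict compatibly to subfaces of $K_G^{even}$ and glue to a global PL homeomorphism $\phi \colon |L_G^{even}| \to |K_G^{even}|$ that carries every simplex of $L_G^{even}$ linearly into a simplex of $K_G^{even}$. This is precisely the assertion that $L_G^{even}$ is a subdivision of $K_G^{even}$.
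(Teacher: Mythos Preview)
Your proof is correct and uses the same underlying map as the paper: a vertex $I$ of $L_G^{even}$ is sent to the barycenter of the face $\alpha(I)$ of $K_G^{even}$ spanned by the connected components of $G|_I$. Both arguments then proceed simplex by simplex, and your $L_\sigma$ coincides with the paper's $\Delta'$.

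The genuine difference is in how the restriction $|L_\sigma|\to|\sigma|$ is shown to be a homeomorphism. The paper argues by induction on $\dim\sigma$: it observes that $L_\sigma$ is a cone with apex $M=\bigcup_{A\in\sigma}A$, checks by hand that $|\sigma|$ is a topological cone with apex $\phi(M)$ over the image of the base, and invokes the inductive hypothesis on proper faces. You instead identify $L_\sigma$ with the order complex of nonempty antichains (equivalently, proper order ideals) of the inclusion forest $\sigma$ and give a closed-form inverse via the order-reversing function $u_A=\sum_{B\in\sigma,\,B\supseteq A}t_B$ and its level filtration; the telescoping identity $u_A-u_{A^{++}}=t_A$ then recovers both the carrying simplex and the barycentric coordinates of the preimage. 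Your route avoids induction, gives an explicit PL inverse, and packages the combinatorics cleanly as the standard triangulation of a simplex by order ideals of a forest; the paper's cone argument is shorter to state but less explicit. One small point worth making explicit for the gluing step (you allude to it) is that if $p$ lies in the open simplex $\tau$ of $L_G^{even}$, then every $A\in\sigma_\tau$ receives strictly positive weight in $\phi(p)$, so $\phi(p)$ lies in the open face $\sigma_\tau$; this pins down the carrier and makes the local bijections patch to a global one.
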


    \begin{proof}
        The simplicial complex $K_G^{\mathrm{even}}$ is an induced subcomplex of $\Delta_{\B(G)}$ and $\Delta_{\B(G)}$ is subdivided to $L$ of Lemma~\ref{lem:subdivision1}. Observe that the corresponding subcomplex of $L$ is exactly $L_G^{\mathrm{even}}$.
    \end{proof}

    Keep in mind our objective is to compute the rational homology of $P_G^{\mathrm{even}}$ (actually $K_G^{\mathrm{even}}$). A simplicial complex is \emph{pure} if its every maximal simplex has the same dimension. A finite, pure simplicial complex $K$ of dimension $n$ is called \emph{shellable} if there is an ordering $C_1, C_2, \ldots$ of maximal simplices of $K$, called a \emph{shelling}, such that $(\bigcup_{i=1}^{k-1}C_i)\cap C_k$ is pure of dimension $n-1$ for every $k$. It is well known (\cite{S}) that shellable complexes are Cohen-Macaulay and thus homotopy equivalent to a wedge sum of spheres of the same dimension. In \cite{B}, Bj\"orner presented a criterion for shellability of order complexes. Let us introduce some notions and properties about posets. A poset is \emph{bounded} if it has a maximum and a minimum. Let $t$ and $s$ be elements of a poset. $t$ \emph{covers} $s$, denoted by $ t\gtrdot s$ or $s\lessdot t$, if $s<t$ and there is no $r$ such that $s<r<t$. A poset $S$ is \emph{graded} if there is an order-preserving function $\rho: S\to\N$, called a rank function, such that $\rho(t)=\rho(s)+1$ if $s\lessdot t$. A finite poset is called \emph{semimodular} if whenever two distinct elements $u$, $v$ both cover $t$ there is a $z$ which covers each of $u$ and $v$. A poset is said to be \emph{locally semimodular} when all intervals $[a,b]=\{x\mid a\le x\le b\}$ are semimodular.
    \begin{theorem}\cite[Theorem 6.1]{B}\label{thm:bjorner}
        Suppose that a finite poset is bounded and locally semimodular. Then its order complex is shellable.
    \end{theorem}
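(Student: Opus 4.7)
The plan is to construct a shelling of the order complex via an edge-lexicographic (EL) labeling, which is Björner's standard technique for results of this type. First, I would verify that the hypotheses force the poset $P$ to be graded. Local semimodularity combined with boundedness gives a Jordan--Hölder style argument: if two maximal chains of an interval $[a,b]$ had different lengths, one could apply the semimodular property repeatedly --- given $u,v$ both covering $t$, pick $z$ covering both --- to ``push'' one chain onto the other until a contradiction on length appears. Thus every maximal chain $\hat 0 = x_0 \lessdot x_1 \lessdot \cdots \lessdot x_n = \hat 1$ has the same length, and the order complex of the proper part is pure of dimension $n-2$.

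Next, I would construct an EL-labeling $\lambda$ of the edges of the Hasse diagram, that is, an integer labeling of the covering relations with the property that in every interval $[x,y]$ there is a unique maximal chain whose sequence of labels is strictly increasing, and moreover this chain is lexicographically smallest among all maximal chains of $[x,y]$. The construction proceeds by induction on rank: at each covering relation $t \lessdot u$, the label is chosen according to a fixed linear order on the elements covering $t$ in the appropriate interval. The semimodular condition provides the local ``diamond'' $t \lessdot u, v \lessdot z$ needed to compare chains through different atoms, and a careful choice of labels around every such diamond enforces the unique-rising-chain property.

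With such a labeling in hand, I would invoke the general principle (itself a theorem of Björner) that an EL-labeling of a bounded graded poset induces a shelling of its order complex by listing the maximal chains in lexicographic order of their label sequences. Checking the shelling condition --- that for each $k$, the intersection $(\bigcup_{i<k} C_i)\cap C_k$ is pure of codimension one in $C_k$ --- reduces to the following local swap: given any two maximal chains $c < c'$ in lex order, one produces a chain $c''$ preceding $c'$ that agrees with $c'$ in all but one element and contains $c \cap c'$, by flipping the first descending pair of labels in $c'$ through the semimodular diamond supplied by local semimodularity.

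The main obstacle, as always with lexicographic shellings, is producing the labeling coherently from only local data. Semimodularity is provided interval by interval, yet a single labeling must serve the EL property in every interval simultaneously; diamonds at different levels of the poset have to be labeled in a mutually consistent way. A clean organization is to fix a linear order on the atoms of $[\hat 0,\hat 1]$ and then, recursively and in parallel over the resulting intervals $[x,\hat 1]$, use the semimodular joins to identify the ``correct'' next atom to label at each rank. Verifying that this recursion terminates with a genuine EL-labeling, rather than merely a consistent assignment, is the technical heart of the argument.
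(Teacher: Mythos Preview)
The paper does not prove this statement at all: Theorem~\ref{thm:bjorner} is quoted from Bj\"orner's paper~\cite{B} as a black box, and is then applied in the proof of Proposition~\ref{shell}. So there is no ``paper's own proof'' to compare your proposal against.

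That said, a word on your proposed approach. Your plan is to produce an EL-labeling and then invoke the general fact that EL-labeled posets are shellable. This is \emph{not} how Bj\"orner proves Theorem~6.1 in~\cite{B}. His argument is a direct inductive construction: fix a linear order on the atoms $a_1,\dots,a_k$ of $P$, note by induction that each upper interval $[a_i,\hat 1]$ is shellable, and then concatenate these shellings (first all maximal chains through $a_1$, then those through $a_2$ but not $a_1$, and so on). The semimodular diamond is used exactly at the gluing step, to show that a maximal chain through $a_j$ can be exchanged into one through some earlier $a_i$ differing in a single element. This recursive scheme was later abstracted by Bj\"orner and Wachs into the notion of \emph{recursive atom ordering}, which is equivalent to CL-shellability --- a strictly weaker condition than EL-shellability. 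Your proposal asks for a single edge labeling that works in every interval simultaneously; you correctly flag this as the main obstacle, but you do not resolve it, and indeed it is not clear that a locally semimodular bounded poset must admit an EL-labeling at all. The direct inductive argument sidesteps this global coherence problem entirely.
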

    \begin{proposition}\label{shell}
        The poset $\widehat{S_G}$ is bounded and locally semimodular. Hence, the simplicial complex $L_G^{\mathrm{even}}$ is shellable of dimension $k-2$.
    \end{proposition}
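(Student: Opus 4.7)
The plan is to verify that $\widehat{S_G}$ is bounded and locally semimodular, then invoke Theorem \ref{thm:bjorner} for shellability. Boundedness is immediate: $\varnothing$ is the minimum, and since $G$ is connected of even order, $V(G)\in\widehat{S_G}$ is the maximum. The candidate rank function is $\rho(I)=|I|/2$; to show $\widehat{S_G}$ is graded, I first verify that every cover $I\lessdot J$ satisfies $|J\setminus I|=2$. Since $|I|$ and $|J|$ are both even, $|J\setminus I|$ is even, and it suffices to exhibit a strict intermediate whenever $|J\setminus I|\ge 4$.

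For this intermediate construction, set $A=J\setminus I$ and pick a component $C$ of $G|_J$ meeting $A$; since $C\cap I$ is a disjoint union of (even) components of $G|_I$, $|C\cap A|$ is even. If $|C|=2$, both vertices of $C$ must lie in $A$ (otherwise the lone vertex in $I$ would form a singleton component of $G|_I$), so $I\cup C\in\widehat{S_G}$ is a strict intermediate. If $|C|\ge 4$, form the auxiliary graph $H$ on $(C\cap A)\sqcup\{D_1,\ldots,D_r\}$ by contracting each component $D_j$ of $G|_I$ inside $C$ to a single vertex, with $H$-edges inherited from $G$; since $G|_C$ is connected so is $H$, and $|C\cap A|\ge 2$, so a shortest $H$-path between two vertices of $C\cap A$ has only $D_j$'s in its interior. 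Taking such a pair $w_1,w_2\in C\cap A$, the set $I\cup\{w_1,w_2\}$ belongs to $\widehat{S_G}$ (its new component has size $2$ plus a sum of even $|D_j|$'s) and is a strict intermediate. Hence covers add exactly two vertices, $\rho$ is a rank function, and $\widehat{S_G}$ is graded of length $k$.

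For local semimodularity, let $t\lessdot u$ and $t\lessdot v$ be covers inside an interval $[t_0,t_1]$ with $u\ne v$, and write $u=t\cup\{a,b\}$, $v=t\cup\{a',b'\}$; since $u\ne v$, the intersection $\{a,b\}\cap\{a',b'\}$ has $0$ or $1$ elements. In the disjoint case, set $z:=u\cup v$: a parity argument shows $\{a,b\}$ lies in a single (even) component $C_u$ of $G|_u$ (else the component of $a$ in $G|_u$ would be $\{a\}$ together with even $G|_t$-components, of odd total size), and similarly $\{a',b'\}$ lies in some $C_v$ of $G|_v$; a direct case check shows every component of $G|_z$ is even (either $C_u,C_v$ stay separate, or they merge into $C_u\cup C_v$ of size $|C_u|+|C_v|-|C_u\cap C_v|$, which is even since $|C_u\cap C_v|$ is a sum of even $|T_i|$'s), so $z\in\widehat{S_G}$. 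In the shared-vertex case $u=t\cup\{a,b\}$, $v=t\cup\{a,c\}$, the union $u\cup v$ has odd cardinality, so a fourth vertex must be adjoined. The component $C$ of $a$ in $G|_{u\cup v}$ has odd size ($3$ plus a sum of even $|T_i|$'s), while the component $G_a$ of $a$ in $G|_{t_1}$ has even size (since $t_1\in\widehat{S_G}$), so $G_a\setminus C\ne\varnothing$. By connectivity of $G_a$, pick $x\in G_a\setminus C$ that is $G$-adjacent to $C$; a short check (using that distinct components of $G|_t$ admit no $G$-edges between them) rules out $x\in t$, whence $x\in t_1\setminus(u\cup v)$. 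Setting $z:=(u\cup v)\cup\{x\}$, the component of $a$ in $G|_z$ becomes $C\cup\{x\}$ together with any additional even $G|_t$-components adjacent to $x$, of even total size, while the other components remain unchanged even components. In either case $|z|=|u|+2$ and $|u|+1$ is odd, so no intermediate exists and $z$ covers both $u$ and $v$ within $[t_0,t_1]$.

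Having established local semimodularity, Theorem \ref{thm:bjorner} yields shellability of $L_G^{even}$. Purity and dimension follow from gradedness: $S_G$ has elements of ranks $1,\ldots,k-1$, so every maximal chain in $S_G$, and hence every maximal simplex of $L_G^{even}$, has $k-1$ elements, giving dimension $k-2$. The principal obstacle is the shared-vertex case of local semimodularity, where $u\cup v$ itself fails to lie in $\widehat{S_G}$ and one must leave $u\cup v$ to find the extra vertex $x$; this step relies crucially on the connectivity of $G_a$ inside $G|_{t_1}$ and on the parity mismatch between $|C|$ (odd) and $|G_a|$ (even).
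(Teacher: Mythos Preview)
Your proof is correct and follows essentially the same strategy as the paper's: verify boundedness, establish gradedness via the rank function $\rho(I)=|I|/2$, split the local semimodularity check into the two cases $|u\cup v|=|t|+4$ (disjoint added pairs) and $|u\cup v|=|t|+3$ (one shared vertex), and then invoke Bj\"orner's theorem. Your argument is in fact more thorough than the paper's in two places: you give a complete proof that covers in $\widehat{S_G}$ add exactly two vertices (the paper simply asserts this), and in the shared-vertex case you explicitly choose the extra vertex $x$ adjacent to the odd component $C$ and verify that $(u\cup v)\cup\{x\}\in\widehat{S_G}$, whereas the paper only says ``there is a set $\bar U\subset b$ containing $U$ and having cardinality $|U|+1$'' and leaves the adjacency requirement and the parity check to the reader. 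One small point you elide: Theorem~\ref{thm:bjorner} literally gives shellability of the order complex of $\widehat{S_G}$; passing to $L_G^{even}$ (the order complex of $S_G=\widehat{S_G}\setminus\{\varnothing,V(G)\}$) uses that every facet of the former contains both $\varnothing$ and $V(G)$, so deleting these cone points preserves shellability --- the paper states this step explicitly.
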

    \begin{proof}
        When $J\subset I$ and $G|_J$ is a component of $G|_I$, let us call $J$ simply a component of $I$. First, note that $\widehat{S_G}$ is a graded poset with rank function $\rho(I)= \frac{|I|}2$. Suppose that $[a,b]$ is an interval in $\widehat{S_G}$ and $t\in[a,b]$. Suppose that $a\le t\lessdot u\le b$, $a\le t\lessdot v\le b$, and $u\ne v$. Then $u<b$ and $v<b$ since $u$ and $v$ are distinct and $|u|=|v|$. Consider the set $u \cup v \subseteq b$. Be careful $u\cup v$ is not necessarily an element of $\widehat{S_G}$. There are two cases: $|u\cup v|=|u|+1$ and $|u\cup v|=|u|+2$. Note that $|u\cup v|\le|u|+2$ since $|u|=|v|=|t|+2$ and $t\subset u\cap v$.

Suppose the first case, i.e., $|u\cup v|=|u|+1$. Then $u\cup v = u \cup \{q\}$ for some $q\in v$. The set $u\cup v$ has a unique component of odd cardinality,  say $U$, which contains $q$. Since every component of $b$ has even cardinality and $U\subset b$, there is a set $\bar{U}\subset b$ containing $U$ and having cardinality $|U|+1$. Then the set $u\cup v \cup \bar{U}$ covers both $u$ and $v$ and is smaller than $b$. Beware that $\bar{U}$ need not be a component of $u\cup v \cup \bar{U}$.

On the other hand, suppose that $|u\cup v|=|u|+2$. Then $u=t\cup\{p,q\}$ and $v=t\cup\{r,s\}$, where $p,q,r$, and $s$ are all distinct elements of $V(G)$. Since every connected component of $u$ has even cardinality, $p$ and $q$ lie in the same component of $u$. The same applies for $r,s\in v$. Consider the set $u\cup v = t\cup \{p,q,r,s\}$. It is obvious that every component of $u\cup v$ has even cardinality, i.e., $u\cup v \in \widehat{S_G}$, therefore we are done.

In conclusion, the order complex of the poset $\widehat{S_G}$ is shellable by Theorem~\ref{thm:bjorner}. Since any facet of the order complex of $\widehat{S_G}$ contains the vertices $\varnothing$ and $V(G)$, $L_G^{\mathrm{even}}$ is also a shellable simplicial complex, reminding that $L_G^{\mathrm{even}}$ is the order complex of the poset $S_G$. It is pure of dimension $k-2$ since any maximal chain of $S_G$ is of length $k-1$.
    \end{proof}

    \begin{remark}
        While preparing the publication of this paper, the authors have realized that the signed $a$-number of $G$ can be described entirely by the poset $\widehat{S_G}$. Namely, let $G$ be a graph of vertex set $V$. Then $sa(G)$ is given by the M\"obius function
        \[
            sa(G) = \mu(\varnothing, V)
        \]
        of the poset $\widehat{S_G}$ (we need to add $V$ into $\widehat{S_G}$ if $|V|$ is odd). Furthermore, $a_i(G)$ is the $i$-th Whitney number of the first kind of $\widehat{S_G}$.
    \end{remark}
    
    \begin{corollary}\label{pg}
        Let $G$ be a connected graph of $2k$ vertices. Then the integral homology of $P_G^{\mathrm{odd}}$ is:
        $$ \widetilde{H}_i(P_G^{\mathrm{odd}})=\left\{\begin{array}{ll}
        \Z^{a}, & i=k-1, \\
        0,      & \text{otherwise,} \\
        \end{array}\right.
        $$
        where $a=\ta(G)$ is an integer determined by the graph $G$ only. We temporarily call $\ta(G)$ the \emph{topological $a$-number} of $G$.
    \end{corollary}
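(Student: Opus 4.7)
The plan is to pass to the dual simplicial complex, compute the homotopy type of $|K_G^{even}|$ from the shellability already established, and then invoke Alexander duality on the triangulated sphere $\Delta_{\B(G)}$ to transfer the conclusion to $K_G^{odd}\simeq P_G^{odd}$.

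First I would apply the Nerve Lemma to the cover of $P_G^{odd}$ by its constituent facets $\{F_I\mid |I|\text{ odd}\}$. Since each facet, and each nonempty intersection of facets, is a face of the simple polytope $P_{\B(G)}$ and hence contractible, this yields $P_G^{odd}\simeq |K_G^{odd}|$. The analogous argument gives $P_G^{even}\simeq |K_G^{even}|$, which is used below.

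Combining Lemma~\ref{subd} and Proposition~\ref{shell}, $|K_G^{even}|$ and $|L_G^{even}|$ are homeomorphic, and $L_G^{even}$ is a shellable pure complex of dimension $k-2$. The classical result of \cite{S} that a shellable complex is homotopy equivalent to a wedge of spheres of the top dimension then produces
\[
|K_G^{even}|\;\simeq\;\bigvee_{a} S^{k-2}
\]
for some nonnegative integer $a$, which we take as the definition of $\ta(G)$. Thus $\tilde H_i(|K_G^{even}|)$ is free abelian of rank $a$ in degree $k-2$ and zero in every other degree.

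Finally I would invoke Alexander duality. By Theorem~\ref{nested}, $\Delta_{\B(G)}$ triangulates $S^{n-1}=S^{2k-2}$ (as $P_{\B(G)}$ is a simple polytope of dimension $n=2k-1$), and $K_G^{odd}$ and $K_G^{even}$ are induced subcomplexes on complementary vertex sets. A straight-line deformation that scales all coordinates supported on even-type vertices down to zero shows that $|\Delta_{\B(G)}|\setminus|K_G^{even}|$ deformation retracts onto $|K_G^{odd}|$; Alexander duality in $S^{2k-2}$ then gives
\[
\tilde H_i(|K_G^{odd}|)\;\cong\;\tilde H^{2k-3-i}(|K_G^{even}|).
\]
Combined with the previous step (with universal coefficients applied to free groups, so no Ext contributions), the right-hand side is $\Z^a$ exactly when $i=k-1$ and vanishes otherwise, and the homotopy equivalence $P_G^{odd}\simeq|K_G^{odd}|$ closes the argument.

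The main technical point is the deformation retraction $|\Delta_{\B(G)}|\setminus|K_G^{even}|\simeq|K_G^{odd}|$; this is the only place where ambient geometry of the sphere is used rather than the combinatorial machinery already assembled, but it is entirely standard for complementary induced subcomplexes of a simplicial sphere.
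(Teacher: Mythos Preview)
Your proposal is correct and follows essentially the same route as the paper: shellability of $L_G^{even}$ gives $|K_G^{even}|\simeq\bigvee_a S^{k-2}$, and Alexander duality in the sphere $\partial P_{\B(G)}\cong S^{2k-2}$ transfers this to $P_G^{odd}$. The paper's own proof is a two-line sketch invoking exactly Lemma~\ref{subd}, Proposition~\ref{shell}, and Alexander duality; you have simply made explicit the Nerve Lemma identification $P_G^{odd}\simeq|K_G^{odd}|$ and the deformation retraction onto $|K_G^{odd}|$, both of which the paper leaves to the reader.
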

    \begin{proof}
        By Proposition \ref{shell}, $L_G^{\mathrm{even}}$ is homotopy equivalent to a wedge sum of $(k-2)$-dimensional spheres. Lemma~\ref{subd} and Alexander duality (see \cite[Theorem~3.44]{H} for reference) imply the expected result.
    \end{proof}

\begin{example}
  Recall the settings as in Example~\ref{example:associahedron}. Then, $P_G^{\mathrm{even}}$ is the union of $3$ disjoint facets, and $P_G^{\mathrm{odd}}$ its complement on $\partial P_{\B(G)}$ which is homeomorphic to the $3$-punctured sphere which is homotopy equivalent to the wedge sum $S^1 \vee S^1$ of two circles. Hence, $\ta(P_4)=2$. See Figure~\ref{fig:pgeven}.
    \begin{figure}[t]
    \begin{center}
\begin{tikzpicture}[thick, scale=0.5]
    \fill[color=green] (5.3,1)--(6.3,2.5)--(2.3,6)--(1.3,4.5)--cycle
        (.6,6.3)--(-.7,5.5)--(-.7,-.5)--(.8,0)--cycle
        (5.3,-.5)--(5.7,-2.3)--(-3.5,-6)--(-4.3,-4.3)--cycle;
    \draw (4,3.5) node {\Large{\red{$23$}}}
        (0.5,-3.5) node {\Large{\red{$12$}}}
        (0.2,2.2) node {\Large{\red{$34$}}};
    \draw (5.3,1)--(5.3,-0.5)--(5.7,-2.3)--(7.2,0)--(6.3,2.5)--cycle
        (1.3,4.5)--(2.3,6)--(0.6,6.3)--(-0.7,5.5)--(-1.7,4)--cycle;
    \draw (6.3,2.5)--(2.3,6)
        (5.3,1)--(1.3,4.5)
        (5.3,-.5)--(-4.3,-4.3)
        (5.7,-2.3)--(-3.5,-6)
        (-1.7,4)--(-4.3,-4.3)
        (-3.5,-6)--(-4.3,-4.3);
    \draw[dotted] (-3.5,-6)--(-0.7,-0.5)
        (0.8,0)--(-0.7,-0.5)
        (0.6,6.3)--(0.8,0)
        (-0.7,5.5)--(-0.7,-0.5)
        (0.8,0)--(7.2,0);
\end{tikzpicture}
    \end{center}
    \caption{The set $P_G^{\mathrm{even}}$ indicates three holes in a sphere.}
    \label{fig:pgeven}
    \end{figure}
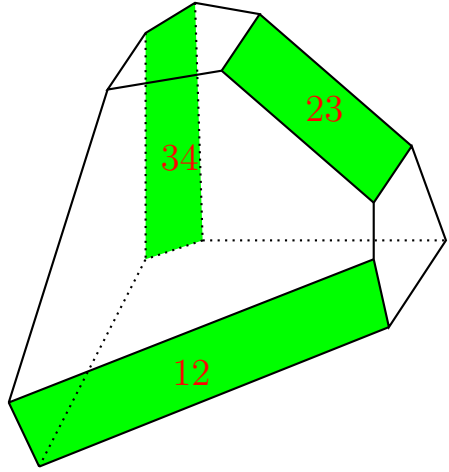
\end{example}

\section{Rational Betti numbers of $M(G)$}\label{sec:betti}
    In this section, we compute the rational homology of $P'_T$ for general $T$.
%
    \begin{proposition}\label{pt}
        Let $G$ be a connected graph on $[n+1]$ and $T\subseteq [n+1]$ be a subset with cardinality $2k$. Suppose $G|_T$ has $\ell$ components, $G_1,\ldots,G_\ell$. If some component of $G|_T$ has an odd number of vertices, then $P'_T$ is contractible, and hence, $\rank_\Q \widetilde{H}_i(P'_T;\Q)=0$ for all $i$. Otherwise, that is, if each component has even order, then,
        $$ \rank_\Q \widetilde{H}_i(P'_T;\Q)=\left\{\begin{array}{ll}
        \ta(G_1)\cdot \cdots \cdot \ta(G_\ell), & i=k-1, \\
        0,      &  i\neq k-1 \\
        \end{array}\right.
        $$
        where $\ta(G_i)$ is the topological $a$-number of $G_i$.
    \end{proposition}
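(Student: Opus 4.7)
The plan is to generalize the Section~\ref{sec:kgeven} apparatus from $T=V(G)$ to arbitrary $T$ of even cardinality, and then compute the sphere count via a Möbius-function argument.

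First, by the Nerve Lemma applied to the facet cover $\{F_I\colon|I\cap T|\text{ odd}\}$ of $P'_T$ (any nonempty intersection of facets of the simple polytope $P_{\B(G)}$ is a face, hence contractible), one has $P'_T\simeq K'_T$. Symmetrically $P^{even}_T\simeq K^{even}_T$ for the even-intersection analogue, and $\partial P_{\B(G)}=P'_T\cup P^{even}_T\simeq S^{n-1}$, which sets up Alexander duality.

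Next I would generalize Lemma~\ref{subd} and Proposition~\ref{shell}. Introduce
$$\widehat{S^{even}_T}=\{J\subseteq V(G)\colon\text{every component of }G|_J\text{ has even intersection with }T\},$$
with $L^{even}_T$ the order complex of $\widehat{S^{even}_T}\setminus\{\varnothing,V(G)\}$. The affine-combination argument of Lemma~\ref{subd} shows that $L^{even}_T$ subdivides $K^{even}_T$. The poset $\widehat{S^{even}_T}$ is bounded ($V(G)\in\widehat{S^{even}_T}$ because $|T|$ is even), graded by $\rho(J)=\tfrac12|J\cap T|+|J\setminus T|$, and locally semimodular: the case analysis of Proposition~\ref{shell} extends with one extra case, since a cover adjoining a single non-$T$ vertex is always valid (such an addition only merges components without changing any $T$-intersection). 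By Theorem~\ref{thm:bjorner}, $L^{even}_T\simeq\bigvee^m S^d$ with $d=\tfrac12|T|+|V(G)\setminus T|-2=n-k-1$, and Alexander duality in $\partial P_{\B(G)}\simeq S^{n-1}$ concentrates $\widetilde{H}_*(P'_T;\Q)$ in degree $n-2-d=k-1$ with rank~$m$.

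To evaluate $m$, I would invoke Philip Hall's identity $m=(-1)^d\mu_{\widehat{S^{even}_T}}(\varnothing,V(G))$. For any $J\in\widehat{S^{even}_T}$ with connected components $C_1,\dots,C_s$ of $G|_J$, the interval $[\varnothing,J]$ in $\widehat{S^{even}_T}$ factors as a product of analogous intervals attached to each pair $(C_j,T\cap V(C_j))$, because no edge of $G|_J$ crosses between the $C_j$. Hence $\mu$ is multiplicative across components, and the resulting Möbius recurrence mirrors the recursion~\eqref{rec} defining $sa(G)$. An induction with base case Corollary~\ref{pg} then yields $m=\prod_i\ta(G_i)$, under the convention $\ta(H):=0$ for odd-order~$H$, which packages Claim~1 together with Claim~2.

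The main obstacle is the last combinatorial identification, especially the vanishing statement when some component of $G|_T$ has odd order: one must show that the Möbius recurrence effectively collapses the order complex of the restricted poset to a contractible space. I expect to handle this via a sign-reversing involution on the relevant Möbius sum, driven by a distinguished vertex of an odd component, or equivalently via an explicit free-face collapse of the corresponding $L^{even}$.
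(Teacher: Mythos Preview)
Your approach diverges substantially from the paper's, and while the topological half is plausible, the combinatorial identification is a genuine gap rather than a mere technicality.

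The paper does \emph{not} generalize the shellability machinery to arbitrary $T$. Instead, it introduces an intermediate lemma (Lemma~\ref{subgraph}): by successively deleting from $K'_T$ every vertex $I$ with $I\not\subseteq T$ (each such vertex has contractible link, since a suitable odd component $J_1\subseteq I\cap T$ is a cone point of the link), one obtains a homotopy equivalence $K'_T\simeq K''_T$, where $K''_T$ is the subcomplex spanned by those $I\subseteq T$ with $|I|$ odd. This single step immediately localizes the problem to $G|_T$ and reveals that $K''_T$ is the simplicial join $K_{G_1}^{odd}\ast\cdots\ast K_{G_\ell}^{odd}$. K\"unneth plus Corollary~\ref{pg} then give the product formula directly, and the odd-component case is trivial: the odd component $G|_{I_1}$ contributes a vertex $I_1$ of $K''_T$ that meets every other vertex, so $K''_T$ is a cone. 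No M\"obius calculus, no sign-reversing involution.

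Your route, by contrast, keeps all of $V(G)$ in play. Even granting that $\widehat{S^{even}_T}$ is graded and locally semimodular (which needs more care than ``one extra case'': covers can now involve a mix of $T$-vertices and non-$T$-vertices, and you must verify that every cover increases $\rho$ by exactly one and that the semimodularity witness always lies in the poset), you still have to prove that $\mu_{\widehat{S^{even}_T}}(\varnothing,V(G))$ depends only on $G|_T$ and factors as $\prod_i(\pm\ta(G_i))$. Your multiplicativity observation only controls $\mu(\varnothing,J)$ for proper $J$; feeding that into the defining recursion for $\mu(\varnothing,V(G))$ gives a sum over all $J\in\widehat{S^{even}_T}$, which is indexed by a set that genuinely depends on the non-$T$ vertices. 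Showing this sum collapses to something supported on $G|_T$ is exactly the content you are missing, and it is not the recursion~\eqref{rec}. The paper's Lemma~\ref{subgraph} is precisely the device that makes this collapse transparent at the level of spaces rather than M\"obius sums; I would recommend adopting it.
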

    We extend the notion of topological $a$-numbers to general graphs. Note that we have already defined topological $a$-numbers for connected graphs with even order. Everything goes the same as its combinatorial sibling $a(G)$. Let $G$ be a finite graph. Then $\ta(G)=0$ if $G$ has a component of odd order. Otherwise, $\ta(G)$ is defined as the product of topological $a$-numbers of each component of $G$. As a convention, we define $ta(\varnothing)=1$ for the empty graph $\varnothing$.

    We introduce some lemmas to prove Proposition~\ref{pt}.
    \begin{lemma}\label{elimv}
        Let $p$ be a vertex of a simplicial complex $\Delta$ and suppose that the link of $p$, $\operatorname{Lk}p$, is contractible. Then $\Delta$ is homotopy equivalent to the complex $\Delta':=\Delta\setminus\operatorname{St}p$, where $\operatorname{St}p$ is the star of $p$.
    \end{lemma}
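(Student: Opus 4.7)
The plan is to express $\Delta$ as a pushout of topological spaces in which one leg is simultaneously a cofibration and a homotopy equivalence, and then conclude by the standard gluing lemma (homotopy invariance of pushouts along cofibrations).

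First I would set up the combinatorial decomposition. Writing $\operatorname{St}p$ for the closed star of $p$ (the subcomplex of simplices admitting $p$ as a face) and $\Delta':=\Delta\setminus\operatorname{St}p$ for the antistar (the subcomplex of simplices not containing $p$), every simplex of $\Delta$ either contains $p$ or does not, so $\Delta=\Delta'\cup\operatorname{St}p$. A simplex lies in both pieces if and only if it does not contain $p$ yet admits a coface containing $p$, which is precisely the defining condition for $\operatorname{Lk}p$; hence $\Delta'\cap\operatorname{St}p=\operatorname{Lk}p$. Because geometric realization commutes with unions and intersections of subcomplexes, the induced square
\[
\xymatrix{
\operatorname{Lk}p \ar[r] \ar[d] & \operatorname{St}p \ar[d] \\
\Delta' \ar[r] & \Delta
}
\]
is a pushout of topological spaces, and every arrow is the inclusion of a CW subcomplex, hence a cofibration.

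Next I would analyze the top horizontal arrow. Simplicially $\operatorname{St}p$ is the cone $p\ast\operatorname{Lk}p$ and is therefore contractible; $\operatorname{Lk}p$ is contractible by hypothesis; so the top inclusion is a map between two contractible spaces and is a homotopy equivalence. Applying the gluing lemma to the pushout above, the parallel arrow $\Delta'\hookrightarrow\Delta$ is also a homotopy equivalence, which is exactly the assertion of the lemma.

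The only point requiring any care is checking that the combinatorial decomposition in the first step really is a pushout of spaces, but this is the standard fact that the geometric realization functor commutes with colimits of simplicial subcomplexes, so I do not anticipate any real obstacle. An equivalent and more hands-on alternative, if one prefers to avoid invoking the gluing lemma, would be to exhibit an explicit strong deformation retraction of $\Delta$ onto $\Delta'$ by first contracting $\operatorname{Lk}p$ to a point inside itself, then extending that contraction conewise over $\operatorname{St}p$ to slide the apex $p$ onto $\Delta'$; the contractibility of $\operatorname{Lk}p$ is exactly what is needed to make such an explicit homotopy go through.
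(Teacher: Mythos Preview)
Your argument is correct. Both you and the paper use the decomposition $\Delta=\Delta'\cup\overline{\operatorname{St}}\,p$ with $\Delta'\cap\overline{\operatorname{St}}\,p=\operatorname{Lk}\,p$ and the fact that the closed star is the cone on $\operatorname{Lk}\,p$, but the mechanism for concluding $\Delta'\simeq\Delta$ is different. You invoke the gluing lemma for pushouts along cofibrations, noting that $\operatorname{Lk}\,p\hookrightarrow\overline{\operatorname{St}}\,p$ is a cofibration between contractible spaces and hence a homotopy equivalence, so the parallel map $\Delta'\hookrightarrow\Delta$ is one too. The paper instead inserts an explicit cylinder: it glues $(\operatorname{Lk}\,p)\times[0,1]$ to $\Delta'$ along $(\operatorname{Lk}\,p)\times\{0\}$ to form a space $\Delta''$, observes that $\Delta''$ deformation retracts onto $\Delta'$, and that $\Delta=\Delta''/\bigl((\operatorname{Lk}\,p)\times\{1\}\bigr)$; since collapsing a contractible CW subcomplex is a homotopy equivalence (Hatcher, Proposition~0.17), $\Delta''\simeq\Delta$. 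Your route is cleaner and more conceptual, while the paper's is more hands-on and cites only an elementary fact rather than the gluing lemma; the explicit deformation-retraction alternative you sketch at the end is close in spirit to the paper's construction. One minor wording point: your parenthetical description of the closed star (``the subcomplex of simplices admitting $p$ as a face'') literally describes only the simplices containing $p$, not their faces; the subsequent identification $\Delta'\cap\operatorname{St}\,p=\operatorname{Lk}\,p$ shows you mean the closed star, so this is just a slip.
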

    \begin{proof}
        Observe that the closure of $\operatorname{St}p$ is the cone over $\operatorname{Lk}p$ with apex $p$. By gluing $(\operatorname{Lk}p)\times I$, $I=[0,1]$, to $\Delta'$ along $\operatorname{Lk}p$ by identifying $(\operatorname{Lk}p)\times \{0\}=\operatorname{Lk}p$, we obtain a new space $\Delta''$. Note $\Delta''/((\operatorname{Lk}p)\times \{1\}) = \Delta$. But $(\Delta'',(\operatorname{Lk}p)\times \{1\})$ is a CW pair. Thus by \cite[Proposition 0.17]{H}, $\Delta''$ is homotopy equivalent to $\Delta$. It is obvious that $\Delta''\simeq \Delta'$ since one have the natural deformation retraction.
    \end{proof}

    \begin{lemma}\label{subgraph}
        Let $T$ be a subset of $[n+1]$, $n\geq 2$, with even cardinality. Denote by $P''_T$ the union of facets $F_I$ such that $I\subseteq T$ and $|I|$ is odd. Then $P'_T$ is homotopy equivalent to $P''_T$.
    \end{lemma}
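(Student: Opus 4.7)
The plan is to apply the Nerve Lemma to reduce the statement to a claim about the nested-set duals $K'_T$ and $K''_T$, and then collapse the larger complex onto the smaller via repeated application of Lemma \ref{elimv}.

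Since each facet $F_I$ of the simple polytope $P_{\B(G)}$ is contractible and every nonempty intersection $F_{I_1}\cap\cdots\cap F_{I_k}$ is again a face of $P_{\B(G)}$, hence contractible, the Nerve Lemma yields $P'_T\simeq |K'_T|$ and $P''_T\simeq |K''_T|$. Because $K''_T$ is precisely the induced subcomplex of $K'_T$ on the vertex set $\{I\in\B(G)\setminus\{[n+1]\}\colon I\subseteq T,\ |I|\text{ odd}\}$, it suffices to prove $K'_T\simeq K''_T$ as simplicial complexes.

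I then enumerate the ``extras'' $V_{\text{ex}}:=\{I\in K'_T\colon I\not\subseteq T\}$ in order of strictly increasing $|I|$ (ties broken arbitrarily) and remove them one at a time from $K'_T$, invoking Lemma~\ref{elimv} at every step. The key verification is that the link of the eliminated vertex $I$ in the current complex is contractible, and my plan is to exhibit a cone apex. Since $|T\cap I|$ is odd, the induced subgraph $G|_{T\cap I}$ contains at least one connected component $C$ of odd cardinality; such a $C$ satisfies $C\in\B(G)$, $C\subseteq T$, and $|C|$ odd, so $C$ is a vertex of $K''_T\subseteq K'_T$ strictly contained in $I$. I claim that $C$ is a cone apex of the link of $I$.

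The verification goes by cases on a vertex $L$ in the link. If $L\supsetneq I$ then $C\subsetneq I\subsetneq L$, so $C$ and $L$ are nested by inclusion. If $L\subsetneq I$ then by the ordering all smaller extras have already been removed, so $L\in K''_T$ and $L\subseteq T\cap I$; since $L$ is connected in $G$ it lies in a single connected component of $G|_{T\cap I}$, whence either $L\subseteq C$ or $L$ lies in a distinct component, giving $L\cap C=\varnothing$ and $L\cup C\notin\B(G)$. If instead $L\cap I=\varnothing$ with $L\cup I\notin\B(G)$, then $G|_{L\cup I}$ is disconnected, so no $G$-edge joins $L$ to $I\supseteq C$, whence $L\cup C\notin\B(G)$. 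A short extension of these pairwise (N1), (N2) checks to disjoint sub-collections (using that $L_1\cup\cdots\cup L_k\notin\B(G)$ already from $N$ being nested, plus the absence of edges from any such $L_j$ to $C$) shows that $C$ is indeed a cone apex. Repeating Lemma~\ref{elimv} along the enumeration removes all extras and leaves $K''_T$, giving $K'_T\simeq K''_T$ and hence $P'_T\simeq P''_T$. The delicate point, and where I expect the bulk of the technical bookkeeping, is the ``below'' case: it relies crucially on the chosen ordering, because without it a surviving sub-$I$ vertex could be an extra overlapping $C$ in a non-nested way, breaking the cone structure.
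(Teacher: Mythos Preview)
Your proof is correct and follows essentially the same strategy as the paper: pass to the dual complexes, then delete the ``extra'' vertices $I\not\subseteq T$ one by one via Lemma~\ref{elimv}, using an odd-order component of $G|_{T\cap I}$ as cone apex for the link at each step. Two minor remarks: the paper orders the removals by the pair $(|I\cap T|,|I\setminus T|)$ rather than by $|I|$, but your total-size ordering works just as well (if $L\subsetneq I$ is still present then $|L|<|I|$); and you can shortcut the ``short extension of the pairwise (N1), (N2) checks'' entirely by invoking Lemma~\ref{lem:flag}, since the current complex is an induced subcomplex of the flag complex $K'_T$ and hence the link of $I$ is itself flag, so pairwise adjacency of $C$ to every link vertex already forces $C$ to be a cone apex.
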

    Remember that $P'_T$ is the union of every facet $F_I$ such that $|T\cap I|$ is odd. Thus, $P''_T\subseteq P'_T\subseteq\partial P$. We use the notation $K''_T$ for the dual complex of $P''_T$.
    \begin{proof}
        Let $F_I\subset P'_T$ be a facet in $P'_T$ and $I\in K'_T$ be the corresponding vertex. $I$ can be uniquely written as $I=J\amalg X$, where $J\subseteq T$ and $X\subseteq [n+1]\setminus T$. Be careful that $J$ is not necessarily a facet, but its cardinality is surely odd. Define by $|J|$ the $j$-degree of $I$ and by $|X|$ the $x$-degree of $I$. By definition, $P''_T$ is the union of facets in $P'_T$ whose $x$-degree is zero.

        By induction on $j$-degrees and $x$-degrees of $I$, we are going to eliminate all facets of nonzero $x$-degrees using Lemma \ref{elimv}. Consider $\operatorname{Lk}I\subset K'_T$. Since our complex is flag by Lemma~\ref{lem:flag}, $\operatorname{Lk}I$ is induced by its vertices, which `meet' $I$. Pick a vertex $L$ of $\operatorname{Lk}I$ other than $I$. Then $L$ meets $I$ and thus $L$ is included in $I$, includes $I$, or meets $I$ by separation. Since $J$ has odd cardinality, $G|_J$ has a component of odd order, say $G|_{J_1}$. If $L$ includes $I$ or meets $I$ by separation, then $L$ meets $J_1$. If $L$ is included in $I$ and the $x$-degree of $L$ is zero, then $L$ is included in $J_1$ or meets $J_1$ by separation, and also meets $J_1$. The remaining case is that $L\subsetneq I$ and $x$-degree of $L$ is nonzero. But then the $j$-degree of $L$ is lesser than that of $I$ and $L$ would have been already removed at some previous stage of the induction.

        In conclusion, $\operatorname{Lk}I$ is a cone with apex $J_1$, therefore it is contractible. By Lemma \ref{elimv}, we can `delete' the vertex $I$ without changing the homotopy type of the simplicial complex $K'_T$ until remaining complex is $K''_T$.
    \end{proof}

    Now we can prove Proposition~\ref{pt}.

    \begin{proof}[proof of Proposition~\ref{pt}]
        We use Lemma \ref{subgraph} to compute the homology. First, we deal with the case every component of $G|_T$ has an even number of vertices. Assume that $G|_T=G_1\amalg\cdots\amalg G_\ell$ and $\left|V(G|_T)\right|=2k$ and $|V(G_i)|=2k_i$, therefore $k_1+\cdots +k_\ell = k$. Recall that the simplicial join of two simplicial complexes $\Delta_1$ and $\Delta_2$ is the simplicial complex $\Delta_1\star\Delta_2$ whose simplex is given by $\{v_0,\ldots, v_p,w_0,\ldots,w_q\}$ if $\{v_0,\ldots,v_p\}$ and $\{w_0,\ldots,w_q\}$ are simplices of $\Delta_1$ and $\Delta_2$ respectively. If $\ell=1$, then $K''_T=K_G^{\mathrm{odd}}$. If $\ell\ge 2$, observe that the simplicial complex $K''_T$ is the simplicial join of $K_{G_i}^{\mathrm{odd}}$'s. Denote by $n_i+1=2k_i$ the number of vertices of $G_i$. The join of $A$ and $B$, $A\star B$, is homotopy equivalent to the (reduced) suspension of the smash product of $A$ and $B$, i.e., $A\star B\simeq \Sigma(A\wedge B)=S^1\wedge A\wedge B$. We have a reduced version of the Ku\"nneth formula (see \cite[page 223]{H} for a reference)
        $$
            \widetilde{H}_*(A\wedge B;\Q)\cong \widetilde{H}_*(A;\Q)\otimes_\Q\widetilde{H}_*(B;\Q).
        $$

         Note that the homology of $A\star B$ is determined by $H_*(A)$ and $H_*(B)$. By Corollary \ref{pg}, $K_{G_i}^{\mathrm{odd}}$ has the same homology as that of the wedge sum
        $$\bigvee^{\ta(G_i)}_{j=1} S^{k_i-1}$$
        and $\widetilde{H}_{k_i-1}(K_{G_i}^{\mathrm{odd}};\Q)=\Q^{\ta(G_i)}$.
        Thus the join is computed like the following
        $$
            K_{G_1}^{\mathrm{odd}}\star\cdots\star K_{G_\ell}^{\mathrm{odd}}\simeq \underbrace{S^1\wedge\cdots\wedge S^1}_{(\ell-1) \text{ times}}\wedge K_{G_1}^{\mathrm{odd}}\wedge\cdots\wedge K_{G_\ell}^{\mathrm{odd}}=S^{\ell-1} \wedge \bigwedge_{i=1}^\ell K_{G_i}^{\mathrm{odd}}
        $$

        and its homology is
        $$
            \widetilde{H}_{k-1}(K''_T;\Q)=\widetilde{H}_{\ell-1}(S^{\ell-1};\Q)\otimes \bigotimes_{i=1}^\ell \widetilde{H}_{k_i-1}(K_{G_i}^{\mathrm{odd}};\Q)=\Q^{\ta(G_1)\cdots \ta(G_\ell)}
        $$

        since $\ell-1+(k_1-1)+\cdots+(k_\ell-1) = \sum k_i -1 = k-1$.

        On the other hand, suppose there is a component, say $G_1=G|_{I_1}$, of $G|_T$ of odd order. Then $I_1$ is a vertex of $K''_T$. Moreover, $I_1$ meets every other vertex of $K''_T$. Hence $K''_T$ is contractible.
    \end{proof}

    Now, we are ready to prove Theorem~\ref{thm:mainintro}.

\begin{proof}[Proof of Theorem~\ref{thm:mainintro}]
 Define the \emph{topological signed $a$-number} of $G$, denoted by $\tsa(G)$, as follows:
    $$
    \tsa(G)=\left\{
        \begin{array}{ll}
          (-1)^k \ta(G), & \text{if $G$ has $2k$ vertices, } k\ge 0,\\
          0, & \text{otherwise}.
        \end{array}\right.
    $$

    Assume $G$ is connected.
    By combining Lemma~\ref{newformula} and Proposition~\ref{pt}, we have that
        $$
            \beta_i(M(G))=\sum_{\atopp{I\subseteq V(G)}{|I|=2i}} \ta(G|_I).
        $$

    If $G$ has odd order, then $\tsa(G)=0$ by definition. Suppose that $G$ has even order. Then the dimension of $M(G)$ is odd and its Euler characteristic is zero, therefore we obtain the formula \eqref{rec} for topological $a$-numbers. This result matches the original $a$-numbers with the topological ones, proving they are the same graph invariants. In other words, $a(G)=ta(G)$ and $sa(G)=tsa(G)$.

    Now, we assume that $G$ is not connected. Let $G=G_1 \amalg \cdots \amalg G_\ell$. Then, $P_{\B(G)} = P_{\B(G_1)} \times \cdots \times P_{\B(G_\ell)}$, and $M(G) = M(G_1) \times \cdots \times M(G_\ell)$. Therefore,
\begin{align*}
    \beta_i(M(G)) &= \sum_{j_1+\cdots+j_\ell=i} \beta_{j_1}(M(G_1))\cdot \cdots \cdot \beta_{j_\ell}(M(G_\ell))\\
    &= \sum_{j_1 + \cdots + j_\ell=i} a_{j_1}(G_1)\cdot \cdots \cdot a_{j_\ell}(G_\ell) \\
    &= \sum_{j_1 + \cdots + j_\ell=i} \prod_{k=1}^\ell \sum_{\atopp{I \subset V(G_k)}{|I|=2j_k}} a(G_k|_I)\\
    &= \sum_{\atopp{I \subset V(G)}{|I|=2i}} \prod_{k=1}^\ell a(G_k|_I)\\
    &= \sum_{\atopp{I \subset V(G)}{|I|=2i}} a\left(\left.\coprod_{k=1}^\ell G_k \right|_I \right)\\
    &= a_i(G),
\end{align*} which proves the theorem.
\end{proof}

\bigskip

\section*{Acknowledgements}
The authors would like to appreciate Jang Soo Kim and Heesung Shin for suggesting combinatorial proofs of Theorems in Section~\ref{sec:anumber} and to Kyoung Sook Park for useful discussion on earlier version of this paper. They are also thankful to the anonymous reviewer of this paper who gave several valuable comments including introduction of Lemma~\ref{lem:subdivision1} and a new proof of Lemma~\ref{subd} which has replaced the original one.


\begin{thebibliography}{Abc}


\bibitem{B}
Bj\"orner,~A.: \emph{Shellable and Cohen-Macaulay partially ordered sets}, {Trans. Amer. Math. Soc. 260 (1980), No.~1,} 159--183.

\bibitem{CD}
{
Carr,~M.~P., Devadoss,~S.~L.: \emph{Coxeter complexes and graph-associahedra}, Topology Appl. 153 (2006), no. 12, 2155--2168.
}

\bibitem{Da}
{Danilov,~V. I.: \emph{The geometry of toric varieties}, Uspekhi Mat. Nauk 33 (1978), no.~2(200), 85--134, 247.}


\bibitem{De}
Delzant,~T.: \emph{Hamiltoniens p\'eriodiques et images convexes de l'application moment}, Bull. Soc. Math. France 116 (1988), 315--339.

\bibitem{DJ}
Davis,~M.~W., Januszkiewicz,~T.: \emph{Convex polytopes, {C}oxeter orbifolds and torus actions}, Duke Math. J. 62 (1991), no.~2, 417--451.

\bibitem{DJS}
Davis,~M.~W., Januszkiewicz,~T., Scott,~R., \emph{Nonpositive curvature of blowups}, Selecta Math. 4 (1998), 491--547.

%

\bibitem{DeMPS}
De Mari,~F., Procesi,~C., Shayman,~M.: \emph{Hessenberg varieties}, Trans. Amer. Math. Soc. 332 (1992), 529--534.


\bibitem{Fu}
Fulton,~W.: \emph{An Introduction to toric varieties}, Ann. of Math. Studies, vol. 113, Princeton
Univ. Press, Princeton, N.J., 1993.


\bibitem{H}
Hatcher,~A.: \emph{Algebraic Topology}, Cambridge, 2002.

\bibitem{Hen}
Henderson,~A.: \emph{Rational cohomology of the real Coxeter toric variety of
type A}, in \emph{Configuration Spaces: Geometry, Combinatorics, and Topology}, Bj\"orner,~A., Cohen,~F., De~Concini,~C., Procesi,~C., Salvetti,~M. (eds.), Pisa, 2012, 313--326.


\bibitem{Ju}
{Jurkiewics,~J.: \emph{Chow ring of projective nonsingular torus embedding}. Colloq. Math. 43 (1980), no.~2, 261--270 (1981).}


\bibitem{Od}
Oda,~T.: \emph{Convex Bodies and Algebraic Geometry. An Introduction to the Theory of Toric
Varieties}, Ergeb. Math. Grenzgeb. (3), 15, Springer-Verlag, Berlin, 1988.

\bibitem{O}
\emph{The On-Line Encyclopedia of Integer Sequences}, http://oeis.org/.

\bibitem{P05}
Postnikov,~A.: \emph{Permutohedra, associahedra, and beyond}, Int. Math. Res. Not. IMRN 2009, no.~6, 1026--1106.

\bibitem{PRW}
Postnikov,~A., Reiner,~V., Williams,~L.: \emph{Faces of generalized permutohedra}, Documenta Mathematica 13 (2008) 207--273.

\bibitem{S}
Stanley,~R.: \emph{Combinatorics and Commutative Algebra}. Second edition. Progress in Mathematics, 41. Birkha\"user Boston, Inc., Boston, MA, 1996. x+164 pp. ISBN 0-8176-3836-9.

\bibitem{St}
Stanley,~R.: \emph{A survey of alternating permutations}, Contemporary Mathematics 531 (2010), 165--196.

\bibitem{Stan}
Stanley,~R.: \emph{Catalan addendum}, \texttt{http://www-math.mit.edu/\textasciitilde rstan/ec/catadd.pdf}.

\bibitem{Su1}
Suciu,~A., Trevisan,~A.: {\em Real toric varieties and abelian
covers of generalized Davis--Januszkiewicz spaces}, preprint, 2012.

\bibitem{Su2}
Suciu,~A.: {\em Polyhedral products, toric manifolds, and
twisted cohomology}, talk at the Princeton--Rider workshop on
Homotopy Theory and Toric Spaces, February 23, 2012.

\bibitem{Trev}
Trevisan,~A: \emph{Generalized Davis-Januszkiewicz spaces and their applications in algebra and topology}, Ph.D. thesis, Vrije University Amsterdam, 2012; available at
\texttt{http://dspace.ubvu.vu.nl/handle/1871/32835}.


\bibitem{Z}
Zelevinsky,~A.: \emph{Nested complexes and their polyhedral realizations}, Pure and
Applied Mathematics Quarterly volume~2, no.~3 (2006) 655--671.

\end{thebibliography}
\end{document}